 \newtheorem{theorem}{Theorem}[section]
    \newtheorem{corollary}[theorem]{Corollary}
   \newtheorem{lemma}[theorem]{Lemma}
    \newtheorem{proposition}[theorem]{Proposition}
    \theoremstyle{definition}
\newtheorem{definition}[theorem]{Definition}
\newtheorem{remark}[theorem]{Remark}
\newtheorem*{claim}{Claim}
\newtheorem{example}[theorem]{Example}
\newtheorem{examples}[theorem]{Examples}
  \newcommand{\Z}{\ensuremath{{\mathbb{Z}}}}
  \newcommand{\G}{\Gamma}
\newcommand{\AG}{A_\G}            
\newcommand{\Lv}{lk[v]}
\title{ Subgroups and quotients of automorphism groups of RAAGS}
\author{Ruth Charney and Karen Vogtmann}
\thanks {R. Charney was partially supported by NSF grant DMS 0705396.  K. Vogtmann was partially supported by NSF grant DMS 0705960}
\begin{document}
\maketitle

\begin{abstract}  We study subgroups and quotients of outer automorphsim groups of right-angled Artin groups (RAAGs).  We prove that for all RAAGs, the outer automorphism group is residually finite and, for a large class of RAAGs, it satisfies the Tits alternative.  We also investigate which of these automorphism groups contain non-abelian solvable subgroups.  
\end{abstract}

\section{Introduction}
 
A right-angled Artin group, or RAAG,  is a finitely-generated group determined completely by the relations that some of the generators commute.  A RAAG is often described by giving a simplicial graph $\G$ with one vertex for each generator and one edge for each pair of commuting generators.   RAAGs include free groups  (none of the generators commute) and free abelian groups (all of the generators commute).  Subgroups of free groups and free abelian groups are easily classified and understood, but  subgroups of right-angled Artin groups lying between these two extremes  have proved to be a rich source of examples and counterexamples in geometric group theory.  For details of this history, we refer to  the article \cite{Ch07}.   

Automorphism groups and outer automorphism groups of RAAGs have received less attention than the groups themselves, with the notable exception of the two extreme examples, i.e. the groups $Out(F_n)$ of outer automorphism groups of a free group and the general linear group $GL(n,\Z)$.   The group  $Out(F_n)$ has been shown to share a large number of properties with $GL(n,\Z)$, including several kinds of finiteness properties and the Tits alternative for subgroups.  These groups have also been shown to differ in significant ways, including the classification of solvable subgroups.  In a series of recent papers \cite{CCV07,ChVo, BCV09}, we have begun to address the question of which properties shared by $Out(F_n)$ and $GL(n,\Z)$ are in fact shared by the entire class of outer automorphism groups of right-angled Artin groups. We are also interested in the question of determining properties which depend on the shape of $\G$ and in determining exactly how they depend on it.  

In our previous work, an  important role was played by certain restriction and projection homomorphisms, which allow one to reduce questions about the full outer automorphism group of a RAAG to questions about the outer automorphism groups of smaller subgroups.  
In the first section of this paper we recall these tools and develop them further.  In the next section we apply them to prove 

\medskip
\noindent {\bf Theorem~\ref{RF}.}  {\it For any defining graph $\G$, the group $Out(\AG)$ is residually finite.}
\medskip

This result was obtained independently by A. Minasyan \cite{Mi09}, by different methods.  We next prove the Tits' alternative for a certain class of {\it homogeneous} RAAGs (see section~\ref{homogeneous}). 

\medskip
\noindent {\bf Theorem~\ref{TA}.}  {\it If $\G$ is homogeneous, then $Out(\AG)$ satisfies the Tits' alternative.}
\medskip

In the last section, we investigate solvable subgroups of $Out(\AG)$.  We provide examples of non-abelian solvable subgroups and we determine an upper bound on the virtual derived length of solvable subgroups when $\AG$ is homogeneous. Finally, by studying translation lengths of infinite order elements, we find conditions under which all solvable subgroups of $Out(\AG)$ are abelian.  
We show that excluding ``adjacent transvections" from the generating set of  $Out(\AG)$ gives rise to a subgroup  $\widetilde Out(\AG)$ satisfying a strong version of the Tits alternative.

\medskip
\noindent {\bf Corollary~\ref{strong Tits}.}  {\it If  $\G$ is homogeneous of dimension $n$, then every subgroup of $\widetilde Out(\AG)$ is either virtually abelian or contains a non-abelan free group.}
\medskip

Thus for graphs which do not admit adjacent transvections, the whole group $Out(\AG)$ satisfies this property.  One case which is simple to state is the following.
 
\medskip
\noindent {\bf Corollary~\ref{2Dim}.}  {\it If $\G$ is connected with no triangles and no leaves, then all solvable subgroups of $Out(\AG)$ are virtually abelian.}
\medskip

Charney would like to thank the Forschungsinstitut f\"ur Mathematik in Zurich and Vogtmann the Hausdorff Institute for Mathematics in Bonn for their hospitality during the writing of this paper.  Both authors would like to thank Talia Fern\'os for helpful conversations.

\section{Some combinatorics of simplicial graphs}

 Certain combinatorial features of the defining graphs $\G$ for our right-angled Artin groups will be important for studying their automorphisms.  In this section we establish notation and recall some basic properties of these features.   

\begin{definition} Let $v$ be a vertex of $\G$. The {\it link of $v$}, denoted $lk(v),$ is the full subgraph spanned by all vertices adjacent to $v$.  The {\it star of $v$}, denoted $st(v),$ is the full subgraph spanned by $v$ and $lk(v)$.
\end{definition}

\begin{definition}Let $\Theta$ be a subgraph of $\G$.   The {\it link} of $\Theta$, denoted $lk(\Theta),$   is the intersection of the links of all vertices in $\Theta$.  The {\it star} of $\Theta$, denoted $st(\Theta)$   is the full subgraph spanned by $lk(\Theta)$ and $\Theta$. The {\it perp} of $\Theta$, denoted $\Theta^\perp$, is the intersection of the stars of all vertices in $\Theta$.  (See Figure 1.)
\end{definition}

\begin{figure}
\begin{center}
\includegraphics[width=4in]{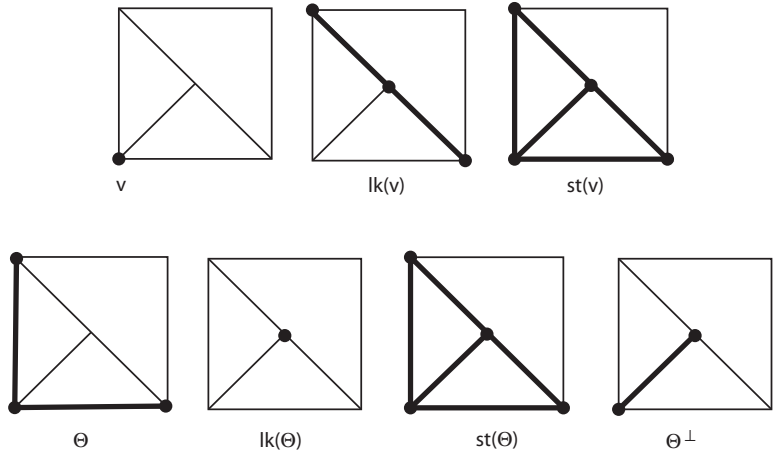}
\end{center}
\caption{Links, stars and perps} 
\end{figure}

These can be expressed in terms of distance in the graph as follows:
\begin{itemize}
\item $v\in lk(\Theta)$ iff $d(v,w)=1$ for all $w\in\Theta$
\item $v\in \Theta^\perp$ iff $d(v,w)\leq 1$ for all $w\in\Theta$
\item $v\in st(\Theta)$ iff $v\in lk(\Theta)\cup\Theta$
\end{itemize}
 
Recall that a complete subgraph of $\G$  is called  a {\it clique}.  (In this paper, cliques need not be maximal.) 
 If $\Delta$ is a clique, then $st(\Delta)=\Delta^\perp$; otherwise $st(\Delta)$ strictly contains $\Delta^\perp$.  

 \begin{lemma}\label{clique}   If $\Delta$ is a clique, then $st(\Delta)^\perp$ is also a clique and  
 $st(\Delta) \supseteq st(\Delta)^\perp \supseteq \Delta$.  
 \end{lemma}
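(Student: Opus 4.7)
The plan is to verify the three containments $\Delta \subseteq st(\Delta)^\perp \subseteq st(\Delta)$ directly from the distance characterizations on the bulleted list above, and then to use the fact (noted just before the lemma) that when $\Delta$ is a clique, $st(\Delta)=\Delta^\perp$. I will freely use the distance reformulations: $u\in\Theta^\perp$ iff $d(u,w)\le 1$ for every $w\in\Theta$.

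First I would check $\Delta\subseteq st(\Delta)^\perp$. Given $v\in\Delta$ and any $w\in st(\Delta)=\Delta\cup lk(\Delta)$, if $w\in\Delta$ then $d(v,w)\le 1$ because $\Delta$ is a clique, and if $w\in lk(\Delta)$ then $w$ is adjacent to every vertex of $\Delta$, so $d(v,w)=1$. Either way $d(v,w)\le 1$, hence $v\in st(\Delta)^\perp$. Next, for $st(\Delta)^\perp\subseteq st(\Delta)$: since $\Delta\subseteq st(\Delta)$, any vertex $u\in st(\Delta)^\perp$ automatically satisfies $d(u,w)\le 1$ for every $w\in\Delta$, i.e.\ $u\in\Delta^\perp$. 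Because $\Delta$ is a clique, $\Delta^\perp=st(\Delta)$, giving the inclusion.

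Finally, to see that $st(\Delta)^\perp$ is a clique, take any two distinct vertices $u_1,u_2\in st(\Delta)^\perp$. By the inclusion just proved, $u_1\in st(\Delta)$. Since $u_2$ lies in the perp of $st(\Delta)$, we have $d(u_2,u_1)\le 1$, and as $u_1\ne u_2$ this forces $d(u_1,u_2)=1$. Thus any two vertices of $st(\Delta)^\perp$ are adjacent, proving that $st(\Delta)^\perp$ is a clique.

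There is no real obstacle here; the argument is a short unpacking of the definitions, and the only slightly subtle point is noticing that one must invoke $\Delta^\perp=st(\Delta)$ (which requires $\Delta$ to be a clique) to promote the inclusion $st(\Delta)^\perp\subseteq\Delta^\perp$ to $st(\Delta)^\perp\subseteq st(\Delta)$.
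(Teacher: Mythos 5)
Your proof is correct and takes essentially the same approach as the paper: both arguments unpack the distance characterization of $\perp$ to get $\Delta \subseteq st(\Delta)^\perp \subseteq st(\Delta)$ (your use of $\Delta^\perp = st(\Delta)$ for cliques is just a repackaging of the paper's observation that $v \in st(\Delta)$ implies $st(v) \supseteq \Delta$), and then derive adjacency of any two vertices of $st(\Delta)^\perp$ from the first inclusion combined with the defining condition of the perp.
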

 \begin{proof}  Since $\Delta$ is a clique, $v\in st(\Delta)$ implies $st(v)\supseteq\Delta$.  Therefore $$st(\Delta)^\perp=\cap_{v \in st(\Delta)} st(v)\supseteq \Delta.$$
 
 If $x\in st(\Delta)^\perp$, then $d(x,v)\leq 1$ for all vertices $v\in st(\Delta)$, including all $v\in\Delta$, i.e. $x\in st(\Delta)$.  If $y$ is another vertex in $st(\Delta)^\perp$, then similarly $d(y,v)\leq 1$ for all vertices $v\in st(\Delta)$,  so in particular $d(y,x)=1$.  Since any two vertices of $st(\Delta)^\perp$ are adjacent, $st(\Delta)^\perp$ is a clique.
 
 \end{proof}

We define  $v\leq w$ to mean $lk(v)\subseteq st(w)$.  This relation is transitive and induces a partial ordering on equivalence classes of vertices $[v]$, where $w \in [v]$  if and only if $v\leq w$ and $w\leq v$ (\cite{ChVo}, Lemma 2.2).  The links  $lk[v]$ and stars $st[v]$ of equivalence classes of maximal vertices $v$ will be of particular interest to us.  

\begin{remark}  In the authors' previous paper \cite{ChVo}, the notation $J_{[v]}$ was used to denote the star of an equivalence class $[v]$. This notation was chosen to emphasize that $st[v]$ has the  structure of the ``join" of two smaller graphs, $[v]$ and $lk[v]$.  In the current, more general setting, we find the notation $st(\Theta)$ to be more intuitive. 
\end{remark}

 For a full subgraph $\Theta \subset \G$, the right-angled Artin group $A_\Theta$ embeds into $A_\Gamma$ in the natural way.  The image is called a {\it special subgroup} of $A_\G$, and  we use the same notation $A_\Theta$ for it.  An important observation is that the centralizer of $A_\Theta$ is equal to $A_{\Theta^\perp}$ (see, e.g.,  \cite{CCV07}, Proposition 2.2).  
 
 We remark that if $v$ is a vertex in $\Theta \subset \G$, then it is possible for $v$ to be maximal in $\Theta$ but not  in $\G$.  Unless otherwise stated, the term "maximal vertex" will always mean maximal with respect to the original graph $\G$. 
 
 The subgraph spanned by $[v]$ is either a clique, or it is disconnected and discrete (\cite{ChVo}, Lemma 2.3).  In the first case  the subgroup $A_{[v]}$ is abelian and we call $v$ an {\it abelian vertex}; in the second,    $A_{[v]}$ is a non-abelian free group, and we call $v$ a {\it non-abelian vertex}.  
Note that for any vertex $v$, $st[v]$ is  the union of the stars of the vertices $w \in[v]$. 

A {\it leaf} of $\G$ is a  vertex which is an endpoint of  only one edge.   A {\it leaf-like} vertex is a vertex  $v$ whose link contains a unique maximal vertex $w$, and $[v]\leq [w]$.  In particular, a leaf is leaf-like.   If $\G$ has no triangles, then every leaf-like vertex is in fact a leaf.

\section{Key tools}

Generators for $Out(\AG)$ were determined by M. Laurence \cite{Lau95}, extending work of H. 
Servatius \cite{Ser89}.  They consist of 
\begin{itemize}
\item graph automorphisms 
\item inversions of a single generator  $v$ 
\item  transvections  $v\mapsto vw$ for generators $v\leq w$
\item partial conjugations by a generator $v$ on one component  of $\G-st(v)$ 
\end{itemize}
As in \cite{ChVo}, we consider the finite-index subgroup $Out^0(\AG)$ of $Out(\AG)$ generated by inversions, transvections and partial conjugations.  This is a normal subgroup, called the {\it pure outer automorphism group}

If $\G$ is connected and $v$ is a maximal vertex, then any pure outer automorphism $\phi$ of $\AG$ has a representative $f_v$ which preserves both $A_{[v]}$ and $A_{st[v]}$ (\cite{ChVo}, Prop. 3.2).  This allows us to define several maps from $Out^0(\AG)$ to the outer automorphism groups of various special subgroups, as follows. 

\begin{enumerate}
\item Restricting $f_v$ to $A_{st[v]}$  gives a {\it restriction map}
$$R_v\colon Out^0(\AG)\to Out^0(A_{st[v]}).$$
\item  The map $\AG\to A_{\G-[v]}$ which sends each generator in $[v]$ to the identity induces an {\it exclusion map} $$E_v\colon Out^0(\AG)\to Out^0(A_{\G-[v]}).$$  
\item   Since $v$ is maximal with respect to the graph $st[v]$ and $lk[v]=st[v]-[v]$, we can compose the restriction map on $A_\G$ with the exclusion map on $A_{st[v]}$ to get a {\it projection map}   $$P_v\colon Out^0(\AG)\to Out^0(A_{lk[v]}).$$ 
\end{enumerate}

If $\G$ is the star of a single vertex $v$, then  $[v]$ is the unique maximal equivalence class, and $R_v$ is the identity.  
If $\G$ is a complete graph, then $\G=[v]$ and $lk[v]$ is empty, in which case we define $P_v=E_v$ to be the trivial map. 

The reader can verify that these maps are well-defined homomorpisms. For the restriction map this follows from the fact that $A_{st[v]}$ is its own normalizer.  For the exclusion map it follows from the fact that the normal subgroup generated by a maximal equivalence class $[v]$ is charactersitic.  (See  \cite{ChVo} for details).

\subsection{The amalgamated restriction homomorphism $R$}
Let $\G$ be a connected graph.  We can put all of the restriction maps $R_v$ together to obtain an amalgamaged restriction map $$R=\prod R_v\colon Out^0(\AG)\to\prod  Out^0(A_{st[v]}),$$
where the product is over all maximal equivalence classes $[v]$.
It was proved in \cite{ChVo} that  the kernel $K_R$ of $R$ is a finitely-generated free abelian group, generated by partial conjugations.  If $\G$ has no triangles, we also found a set of generators for $K_R$ \cite{CCV07}.   We will need this information for general $\G$ in what follows,   so we will now present another (and simpler) proof that $K_R$ is free abelian which also identifies a  set of generators for $K_R$.  The proof will use the following fact due to Laurence.

\begin{theorem}[\cite{Lau95}, Thm 2.2] \label{partialconj} 
An automorphism of $\AG$ which takes every vertex to a conjugate of itself is a product of partial conjugations.
\end{theorem}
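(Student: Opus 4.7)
The plan is to reconstruct $\phi$ from the data of the conjugators. For each vertex $v$, write $\phi(v) = g_v v g_v^{-1}$. The element $g_v$ is defined only modulo the centralizer $A_{st(v)}$, so first I would choose a canonical representative, say a word that is reduced in $A_\Gamma$ and has no letter from $st(v)$ at its right end. The goal is to show that after such normalization, the family $\{g_v\}$ is of the shape that can be produced by composing partial conjugations.

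First I would establish local constraints coming from the commutation relations of $\Gamma$. If $v$ and $w$ are adjacent vertices, then $[\phi(v),\phi(w)] = 1$ forces $g_w^{-1} g_v$ to conjugate $v$ into the centralizer of $w$. Using the classification of centralizers in a RAAG (the centralizer of a single vertex $w$ is $A_{st(w)}$), this pins down $g_w^{-1} g_v$, modulo $A_{st(v)}$ and $A_{st(w)}$, inside a subgroup built from $st(v)\cap st(w)$. Under the chosen normal form, the comparison yields the key local statement: for any vertex $x \notin st(v)\cup st(w)$, the letter $x$ occurs in the normal form of $g_v$ precisely when it occurs in $g_w$, and at the same syllable position.

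Second, I would propagate this local agreement through the graph. Fix a vertex $x$, and consider the set $S_x$ of vertices $v$ whose conjugator $g_v$ contains $x$ at a given position. The local step shows that $S_x$ is saturated along edges of $\Gamma - st(x)$, hence is a union of connected components of $\Gamma - st(x)$. Each such component $C$ is precisely the support of a partial conjugation $\pi_{x,C}$, giving a canonical candidate partial conjugation to peel off.

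Third, I would induct on the total conjugator length $\sum_v |g_v|$. In the base case all $g_v$ are trivial, so $\phi$ fixes every generator and is therefore the identity. For the inductive step, pick an outermost letter $x^{\pm 1}$ occurring in some $g_v$, let $C$ be the associated component from the previous step, and replace $\phi$ by $\phi \circ \pi_{x,C}^{\mp 1}$. A normal-form bookkeeping argument shows that this composition strictly decreases the total conjugator length, so induction gives the desired factorization. The main obstacle is the second step: passing from a pairwise commutation constraint to the global statement that occurrences of any fixed letter $x$ across the family $\{g_v\}$ are governed by components of $\Gamma - st(x)$. This requires carefully controlling the reduced-word normal form under the centralizer ambiguity in the choice of each $g_v$, and is the combinatorial core of Laurence's theorem.
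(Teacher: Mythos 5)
First, a point of orientation: the paper does not prove this statement. It is imported verbatim from Laurence \cite{Lau95} and used as a black box (in the proofs of Lemma~\ref{star} and Theorem~\ref{kernel}), so there is no in‑paper argument to compare yours against; the relevant benchmark is Laurence's own proof, which is a generalization of McCool's theorem that the pure symmetric automorphism group of a free group is generated by the maps $x_i\mapsto x_jx_ix_j^{-1}$, and which does follow the broad strategy you outline: normalize the conjugators $g_v$ (the minimal‑length representative of the coset $g_vA_{st(v)}$ is unique, so your normalization is legitimate), extract constraints from $[\phi(v),\phi(w)]=1$ via the centralizer computation $C(v)=A_{st(v)}$, propagate along components of $\G-st(x)$, and induct on total conjugator length.

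That said, what you have is a correct strategy rather than a proof, and you concede as much: the ``combinatorial core'' you defer is exactly where the theorem lives. Two concrete points where the sketch as written would fail. (i) Your peeling step assumes that the outermost letter $x^{\pm1}$ of some $g_v$ can be removed by composing with a partial conjugation by $x$, but the canonical representative $g_v$ can perfectly well begin with a letter $s\in st(v)$: take $\phi=c_s\circ c_y$, where $c_y$ conjugates the component of $\G-st(y)$ containing $v$ by $y$ and $c_s$ conjugates the component of $\G-st(s)$ containing $y$ by $s$, with $s$ adjacent to $v$ and $y$ adjacent to neither; then $\phi(v)=(sys^{-1})v(sy^{-1}s^{-1})$ and the minimal conjugator is $g_v=sy$. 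Since $v\in st(s)$, no partial conjugation by $s$ moves $v$ at all, so $|g_v|$ can only decrease because the partial conjugation rewrites the letter $y$ \emph{inside} $g_v$; your induction must therefore anchor on a vertex outside $st(s)$ whose conjugator genuinely begins with $s$, and you must separately verify that composing with $\pi_{s,C}^{-1}$ shortens every affected conjugator and lengthens none of the others (including conjugators of vertices in $st(s)$, which, as the example shows, may still contain $s$). (ii) The local claim that $x$ occurs in $g_v$ and $g_w$ ``at the same syllable position'' is stronger than what the commutation constraint delivers. What $[\phi(v),\phi(w)]=1$ gives, via the fact that an element conjugating $w$ into $A_{st(v)}$ lies in $A_{st(v)}A_{st(w)}$, is $g_w\in g_vA_{st(v)}A_{st(w)}$; since the inserted factors contain no $x^{\pm1}$ when $x\notin st(v)\cup st(w)$, the occurrences of $x^{\pm1}$ (and, with an extra argument, the possible leading letters) are preserved, but their positions are not. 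Neither obstacle is fatal---both are dealt with in Laurence's proof---but they are precisely the content of the theorem, so as it stands the proposal is an accurate road map with its hardest stretch left unbuilt.
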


By definition, any automorphism representing an element of   $K_R$  acts on the star of each maximal equivalence class of vertices as conjugation by some element of $\AG$.  We begin by showing that the same is true for every equivalence class:

\begin{lemma}\label{star} Let $f$ be an automorphism representing an element of $K_R$.  Then for every vertex $v\in\G$, $f$ acts on  $st[v]$ as conjugation by some $g\in \AG$.  
\end{lemma}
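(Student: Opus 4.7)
My plan is to induct on the partial order of equivalence classes of vertices, working from the top (maximal classes) down.

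Base case: $[v]$ maximal. Then the lemma holds by definition of $K_R$, since $R_v(f)=1$ means $f|_{A_{st[v]}}$ is inner, i.e., conjugation by some $g\in \AG$.

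Inductive step: Suppose $[v]$ is non-maximal. Pick a maximal class $[u]$ with $[v]\le [u]$, which exists by finiteness of the poset. For any $v'\in[v]$ and $u'\in[u]$ we have $lk(v')\subseteq st(u')$, and intersecting over $v'\in[v]$ yields $lk[v]\subseteq st(u')\subseteq st[u]$. By the base case applied to $[u]$, $f$ acts on $lk[v]$ as conjugation by the element $g_u$ associated to $[u]$. Composing $f$ with conjugation by $g_u^{-1}$ produces an automorphism $\tilde f$ that fixes $A_{st[u]}$, hence $A_{lk[v]}$, pointwise. The task reduces to finding $h \in A_{lk[v]^\perp}$ such that $\tilde f$ acts on $A_{[v]}$ as conjugation by $h$; then $g=g_u h$ witnesses the lemma for $[v]$, because $h$ centralizes $A_{lk[v]}$, so conjugation by $g$ still acts as $g_u$ on $lk[v]$ and acts as $g_u h$ on $[v]$, matching $f$ on both.

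To produce $h$: each $v'\in[v]$ commutes with $A_{lk[v]}$ (since $v'$ is adjacent to every element of $lk[v]$), so $\tilde f(v')$ commutes with $\tilde f(A_{lk[v]})=A_{lk[v]}$; hence $\tilde f(v')\in A_{lk[v]^\perp}$. Thus $\tilde f$ preserves the special subgroup $A_{lk[v]^\perp}$ and sends $A_{[v]}$ into it. The main obstacle is showing that the induced automorphism $\tilde f|_{A_{lk[v]^\perp}}$ is given, on $A_{[v]}$, by conjugation by a single element $h$ of $A_{lk[v]^\perp}$. I would approach this by examining how $\tilde f$ interacts with the smaller RAAG $A_{lk[v]^\perp}$, either by a further restriction argument combined with the inductive hypothesis, or by applying Theorem~\ref{partialconj} to that subgroup after checking that $\tilde f$ carries each vertex of $[v]$ to a conjugate in $A_{lk[v]^\perp}$. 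The essential point is consistency: the elements $g_w$ promised on distinct stars $st[w]$ for maximal $[w]$ must agree on overlaps modulo the centralizers of the overlapping vertices, and this compatibility is precisely what yields a single $g$ for each non-maximal $[v]$.
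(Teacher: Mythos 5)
Your reduction is sound as far as it goes: adjusting $f$ by the inner automorphism coming from a maximal $[u]\geq[v]$ so that it fixes $A_{lk[v]}$ pointwise, and observing that it then suffices to find a single conjugator $h$ in the centralizer $A_{lk[v]^\perp}$ for the action on $A_{[v]}$, is a correct restatement of what must be proved. But that last step is the entire content of the lemma, and you have not proved it --- you explicitly flag it as ``the main obstacle'' and offer only two unexecuted directions. Neither direction works as stated. Applying Theorem~\ref{partialconj} inside $A_{lk[v]^\perp}$ would (after also justifying that conjugacy in $\AG$ of elements of a special subgroup implies conjugacy inside that subgroup) only tell you that $\tilde f$ restricted to $A_{lk[v]^\perp}$ is a product of partial conjugations of that smaller RAAG; when $[v]$ is a non-abelian class its vertices could a priori lie in different components of the relevant complements and be conjugated by different elements, which is exactly the consistency you need and do not have. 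Likewise, your proposed induction on the poset of equivalence classes is never actually used: the inductive step invokes only the base case.

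The paper closes this gap with two ideas absent from your proposal. First, since every vertex lies in the star of some maximal class, $f$ sends each vertex to a conjugate of itself, so by Theorem~\ref{partialconj} $f$ is a product of partial conjugations; this yields the principle that the total exponent of a conjugating generator $u$ is constant on each component of $\G-st(u)$, which is what forces the various local conjugators to agree. Second, rather than working inside $A_{lk[v]^\perp}$, one chooses a \emph{maximal} vertex $w_0$ adjacent to $v$ (in $lk(v)\cap lk(v_0)$); then $[v]\subseteq st[w_0]$, so the definition of $K_R$ already hands you a single conjugator $g$ for all of $[v]$, and Lemma~\ref{clique} applied to $st(e_0)^\perp$ puts $g$ in an abelian special subgroup, allowing one to split off the factor $g_2$ and verify, via the exponent argument, that $g_2$ commutes with $lk[v]$. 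Without some version of these inputs, your outline does not constitute a proof.
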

 \begin{proof}   This is by definition of the kernel if $v$ is maximal.   Since every vertex of $\G$ is in the star of some maximal vertex, $f$ sends every vertex to a conjugate of itself.  
 By Theorem~\ref{partialconj}, this implies that $f$ is a product of partial conjugations.  
 
 If $v$ is not maximal, then choose a maximal vertex $v_0$ with $v<v_0$.  After adjusting by an inner automorphism if necessary, we may assume $f$ is the identity on $st[v_0]$.  If $v$ is adjacent to $v_0$, then $st[v]\subset st[v_0]$ and we are done.  
 
If $v$ is not adjacent to $v_0$, choose a maximal vertex $w_0\in lk(v)\cap lk(v_0)$ (note that one always exists).  Then  $f$ acts as conjugation by some $g$ on $st[w_0]$.  Let $e_0$ be the edge from $v_0$ to $w_0$.  Since $st(e_0)\subset st(w_0)$, $f$ acts as conjugation by $g$ on all of $st(e_0)$. Since  $st(e_0)\subset st(v_0)$,  $g$ centralizes $st(e_0)$, i.e. $g$ is in the subgroup generated by $st(e_0)^\perp$.   By Lemma~\ref{clique},  $st(e_0)^\perp=\Delta$ is a clique containing $e_0$, so the subgroup $A_\Delta$ is abelian.  

\begin{figure}\label{starfigure}
\begin{center}
\includegraphics[width=2.5in]{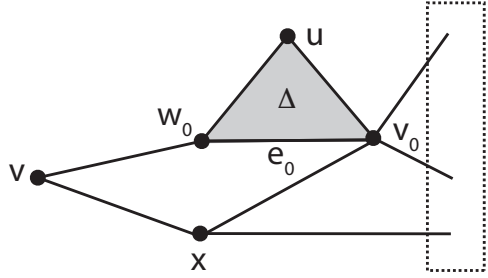}
\end{center}
\caption{ Notation for  proof of Lemma~\ref{star} }
\end{figure}

Since $A_\Delta$ is abelian, we can write $g = g_2g_1$ where $g_1$ is a product of generators in $lk[v]$ and $g_2$ a product of generators not in $lk[v]$. We claim that $f$ acts as conjugation by $g_2$ on all of $st[v]$.  Since  $[v] \subset st[w_0]$, $f$ acts as conjugation by $g$ on $[v]$, and since $g_1$ commutes with $[v]$, this is the same as conjugation by $g_2$.  The action of $f$ on $lk[v]$ is trivial, since $lk[v] \subset st[v_0]$, so it suffices to show that $g_2$ commutes with $lk[v]$.
For suppose $u \in \Delta$ does not lie in  $lk[v]$, and $x \in lk[v]$.  Then either $x$ lies in $st(u)$, hence commutes with $u$, or  $x$ and $v$ lie in the same component of $\Gamma -  st(u)$.  In the latter case, since $f$ is a product of partial conjugations, the total exponent of $u$ in the conjugating element must be the same at $v$ and at $x$; but $f(x)=x$, so this total exponent must be $0$.  That is, $u$ can appear as a factor in $g_2$ only if it commutes with all of $lk[v]$.
 \end{proof}

Next, we describe some automorphisms contained in the kernel $K_R$.  If $\G$ is a connected graph and $v$ is a vertex of $\G$, say vertices $x$ and $y$ are in the same {\it $\hat v$-component} of $\G$ if $x$ and $y$ can be connected by an edge-path which contains no edges of $st(v)$ (though it may contain vertices of $lk(v)$). A $\hat v$-component lying entirely inside $st(v)$  is called a {\it trivial} $\hat v$-component, and any other $\hat v$-component is {\it non-trivial}.  In Figure 2,  there are two non-trivial $\hat v$ components, one consisting of $A\cup B$, and one consisting of $C\cup D$.   
If $st(v)$ has no triangles, a non-trivial $\hat v$-component is the same thing as a non-leaf component of $\G-v$.  In general,
each component of $\G-st(v)$ is contained in a single $\hat v$-component, but a single $\hat v$-component may contain several components of $\G-st(v)$.

\begin{figure}\label{vhatcomponents}
\begin{center}
\includegraphics[width=2.5in]{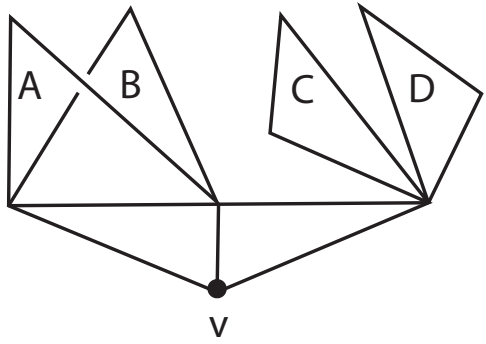}
\end{center}
\caption{ The $\hat v$-components are $A\cup B$ and $C\cup D$ }
\end{figure}

\begin{definition} A {\it $\hat v$-component conjugation} is an automorphism of $\AG$ which conjugates all vertices in a single nontrivial $\hat v$-component of $\G$ by $v$. 
\end{definition}
By the remarks above, a  $\hat v$-component conjugation is in general a product of partial conjugations by $v$ on components of $\G-st(v)$.  To see that such conjugations lie in $K_R$, note that for any $w$,   all of the vertices of $st[w]$ which do not lie in $st(v)$ lie in the same $\hat v$-component as $w$. Hence any  $\hat v$-component conjugation acts as an inner automorphsim on $st[w]$.  

Let $\hat c(v)$ be the number of non-trivial $\hat v$-components in $\G$. 
\begin{theorem}\label{kernel}  The kernel $K_R$ of the restriction map  is free abelian, generated by non-trivial $\hat v$-component conjugations for all $v\in\G$.  The rank of $K_R$ is $\sum_{v \in \G} (\hat c(v)-1).$
\end{theorem}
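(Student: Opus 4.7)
My plan is to identify $K_R$ explicitly by writing each of its elements as a product of $\hat v$-component conjugations and then extracting its group structure via an abelian invariant.  By Lemma~\ref{star} together with Theorem~\ref{partialconj}, any $f$ representing an element of $K_R$ is a product of partial conjugations, so I may write $f = \prod_{v,C} c_{v,C}^{n(v,C)}$ in $Aut(\AG)$, with $C$ ranging over components of $\G - st(v)$ and $n(v,C)\in\Z$.  The central task is to show that $n(v,C)$ depends only on the $\hat v$-component containing $C$.

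To prove this constancy, suppose $C, C'$ are two such components in the same $\hat v$-component.  By induction on the length of a $\hat v$-path I reduce to the case where some $u \in lk(v)$ has neighbors $x \in C$ and $y \in C'$.  Applying Lemma~\ref{star} at $u$ yields $g \in \AG$ with $f(x) = gxg^{-1}$ and $f(y) = gyg^{-1}$.  Comparing with the partial conjugation expression $f(x) = g_x x g_x^{-1}$, we may write $g = g_x z_x$ for some $z_x \in Z(x) = A_{st(x)}$; since $x \notin st(v)$ implies $v \notin st(x)$, the $v$-exponent of $z_x$ in the abelianization $\AG^{\mathrm{ab}} \cong \Z^\G$ vanishes, so the $v$-exponent of $g$ equals that of $g_x$, namely $n(v,C)$.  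The same analysis at $y$ gives $n(v,C')$, whence $n(v,C) = n(v,C')$.  This essentially repeats the exponent bookkeeping at the end of the proof of Lemma~\ref{star}.

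Having established constancy, each $f \in K_R$ has the form $\prod_{v,H} \alpha_{v,H}^{n(v,H)}$ over non-trivial $\hat v$-components $H$.  To deduce the group structure I consider the map $\Phi$ from $K_R$ to the abelian group whose elements are tuples $(\bar g_x \bmod \Z\cdot st(x))_{x\in\G}$ modulo simultaneous shift by an arbitrary element of $\Z^\G$.  Because partial conjugations act trivially on $\AG^{\mathrm{ab}}$, the composition identity $g_{12,x} = f_1(g_{2,x}) g_{1,x}$ abelianizes to addition, so $\Phi$ is a well-defined homomorphism and $K_R$ inherits abelianness.  Injectivity of $\Phi$ follows because $\Phi(f) = 0$ means there is a single $\bar h \in \Z^\G$ with $\bar g_x \equiv \bar h \pmod{\Z\cdot st(x)}$ for every $x$, from which one checks $f = \iota_h$ is inner and hence trivial in $Out$.

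Evaluating $\Phi$ on each $\alpha_{v,H}$ shows that the only linear relations among the $\Phi(\alpha_{v,H})$ are $\sum_H \Phi(\alpha_{v,H}) = \Phi(\iota_v) = 0$, one per vertex $v$ with $\hat c(v) \ge 1$.  This gives $K_R$ the structure of a free abelian group of rank $\sum_v (\hat c(v) - 1)$, as claimed.  The main obstacle is the constancy step above:  the partial conjugation expression of $f$ is not unique, so one must combine Lemma~\ref{star} applied at a linking vertex $u \in lk(v)$ with the centralizer-vanishing observation $v \notin st(x)$ to extract an unambiguous $v$-exponent.  Once that is in hand, the properties of $\Phi$ and the rank count follow formally.
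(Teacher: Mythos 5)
There is a genuine gap at the step where you pass from the exponent data to the identification of $f$. Your constancy argument (that $n(v,C)=n(v,C')$ for components $C,C'$ of $\G-st(v)$ lying in the same $\hat v$-component, via Lemma~\ref{star} applied at a linking vertex $u$ and the observation that $v\notin st(x)$ makes the $v$-exponent of $\bar g_x$ well defined) is correct, and it is a nice way to organize that part. But the conclusion that $f$ therefore \emph{equals} $\prod_{v,H}\alpha_{v,H}^{n(v,H)}$ in $Out(\AG)$, and likewise the assertion that $\Phi(f)=0$ implies $f$ is inner, are exactly the hard content of the theorem and are not justified. Laurence's theorem gives $f$ as an ordered word $c_1\cdots c_m$ in partial conjugations; collecting the factors into $\prod_{v,C}c_{v,C}^{n(v,C)}$ changes the automorphism unless the relevant commutation relations hold, and the group generated by partial conjugations is in general non-abelian even modulo inner automorphisms. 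Equivalently: knowing that $\bar g_x\equiv\bar h \pmod{\Z\cdot st(x)}$ in the \emph{abelianization} for every $x$ does not yield $g_x\equiv h$ modulo the actual centralizer $A_{st(x)}$, which is what ``$f=\iota_h$'' requires. For arbitrary products of partial conjugations this implication fails (commutators of partial conjugations have all exponents $n(v,C)=0$ but need not be trivial in $Out(\AG)$), so your ``one checks'' must invoke membership in $K_R$ in an essential way — and at that point in your argument you have not used anything beyond Lemma~\ref{star}, which you already spent on the constancy step.

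To close the gap you would have to prove: if $f\in K_R$ and all the normalized exponents vanish, then $f$ is trivial in $Out(\AG)$. The only proof I see of this proceeds by the fixed-set induction that the paper uses: normalize $f$ to be the identity on $st[v_0]$, take $w$ adjacent to the fixed set, use Lemma~\ref{clique} to place the conjugator $g$ acting on $st[w]$ inside an abelian subgroup $A_\Delta$ with $\Delta=st(e)^\perp$, and then argue vertex by vertex that each exponent $\epsilon_i$ of $g$ must vanish because some $x\in st(w)-st(u_i)$ would otherwise carry a nonzero $u_i$-exponent. So the homomorphism $\Phi$ does not buy you a shortcut; it repackages the statement but defers the same induction. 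The rank computation at the end also silently assumes that the $\Phi(\alpha_{v,H})$ satisfy no relations beyond $\sum_H\Phi(\alpha_{v,H})=0$, which is true but again needs the injectivity you have not established. I would suggest either carrying out the inductive triviality argument explicitly, or restructuring along the paper's lines: prove generation by enlarging the fixed set, then verify commutativity of the $\hat v$-component conjugations and identify the single relation per vertex.
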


\begin{proof}  Let $\widehat{\mathcal PC} $ denote the set of all non-trivial $\hat v$-component conjugations for all $v\in\G$.  We first prove that $\widehat{\mathcal PC}$ generates $K_R$.

Let $\phi\in K_R$.  For each representative  $f$ of $\phi$, let $V_f$ be the set of vertices $v$ such that $st[v]$ is pointwise fixed.  Now fix a representative $f$ such that $V_f$ is of maximal size.  We proceed by induction on the number of vertices in $\G - V_f$.

If $V_f =\G$, then $f$ is the trivial automorphism and there is nothing to prove.  If not, choose a vertex $w$ at distance 1 from $V_f$, so $w$ is connected by an edge $e$ to some $v \in V_f$.  Then $f$ acts non-trivially on $st[w]$ as conjugation by some $g\in \AG$.  Since $f$ acts trivially on $st[v]$,   $g$ fixes $st(e)$.   The centralizer of $A_{st(e)}$ is equal to $A_{st(e)^\perp}$.  By Lemma~\ref{clique}, $st(e)^\perp=\Delta$ for some clique $\Delta$ containing $e$, so  $g$ is in the abelian subgroup $A_{\Delta}$ and we can write  $g=u_1^{\epsilon_1}\ldots u_k^{\epsilon_k}$ for  distinct vertices $u_i\in\Delta$.  

\begin{figure}\label{kernelfigure}
\begin{center}
\includegraphics[width=3in]{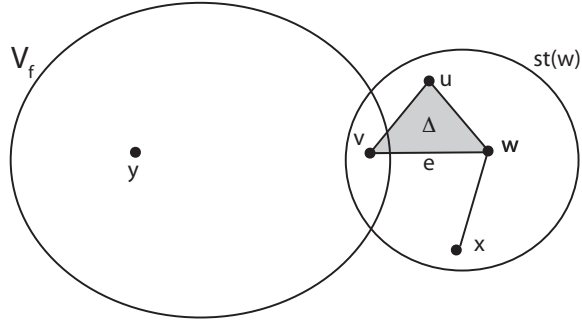}
\end{center}
\caption{ Notation for  proof of Theorem~\ref{kernel} }
\end{figure}
 
 If $st(w)\subseteq st(u_i)$ (e.g. if $u_i=w$), conjugation by $u_i$ is trivial on $st(w)$ and we may assume $\epsilon_i=0$, i.e. $u_i$ does not appear  in the expression for $g$.  If $V_f\subset st(u_i)$, replace $f$ by $f$ composed with  the inner automorphism by $u_i^{-\epsilon_i}$; the new $V_f$ contains (so is equal to) the old one.  We may now assume neither $st(w)$ nor $V_f$ are contained in the star of any $u_i$.  
  
Fix $u_i$ and $x\in st(w)-st(u_i)$ and $y\in V_f-st(u_i)$.   We claim that $x$ and $y$ are in different connected components of $\G-st(u_i)$.  To see this, suppose $x$ and $y$ are in the same connected component of $\Gamma-st(u_i)$.  Since $f$ sends each vertex to a conjugate of itself, Theorem~\ref{partialconj} implies that $f$ is a product of partial conjugations, hence $x$ and $y$ must be conjugated by the same total power of $u_i$.  For $y$ this power is zero, since $y\in V_f$, and so $\epsilon_i$ must also be zero, i.e. $u_i$ does not occur in the expression for $g$.  

We claim further that $x$ and $y$ must be in different $\hat u_i$-components of $\G$.  Suppose they were in the same $\hat u_i$-component.  Let $\gamma$  be an edge-path joining $y$ to $x$ which avoids edges of $st(u_i)$, with vertices  $y=x_0,x_1,x_2,\ldots, x_k=x$.   We know that $y$ is fixed by $f$ and $x$ is conjugated by a non-trivial power of $u_i$.  Therefore there is some $x_j$ in $lk(u_i)$ with the property that $x_{j-1}$ is not conjugated by $u_i$ but $x_{j+1}$ is conjugated by a non-trivial power of $u_i$.   Since $\gamma$ does not use edges of $st(u_i)$, neither $x_{j-1}$ nor $x_{j+1}$ is in $lk(u_i)$, i.e. neither commutes with $u_i$.  Thus $f$  does not act as conjugation by the same total power of $u_i$ on all of $st[x_j]$, contradicting Lemma~\ref{star}.   
 
The vertices of $st(w)-st(u_i)$ lie in a single, non-trivial $\hat u_i$-component (the component containing $w$) and by the discussion above, this $\hat u_i$-component contains no vertices of $V_f-st(u_i)$.
Thus, there is a non-trivial $\hat u_i$-component conjugation $f_i$  which  affects vertices of $st(w)$ but not  $V_f$.   The automorphism $f'=f_k^{-\epsilon_k}\circ  \cdots \circ f_1^{-\epsilon_1}\circ f$ has a strictly larger $V_{f'}$, which includes $w$ as well as $V_f$.  By induction, $f'$ is a product of elements of $\widehat{\mathcal PC}$, hence so is $f$.
 
 It remains to check that any two  elements of $\widehat{\mathcal PC}$ commute in $Out(\AG)$.  Let $f_v$ be  $\hat v$-component conjugation, and $f_w$   a  $\hat w$-component conjugation.  If $v$ and $w$ are adjacent, these commute.  If $d(v,w)>1$, then $st(w)$ is contained in a single  $\hat v$-component $D_v$, and  $st(v)$ is contained in a single  $\hat w$-component $D_w$.
 It follows that $D_v$ contains every $\hat w$-component except $D_w$, and $D_w$ contains every  $\hat v$-component except $D_v$.  It is now easy to check that for any $\hat v$-component $C_v$ and $\hat w$-component $C_w$, one of the following holds:  $C_w$ and $C_v$ are disjoint, $C_v\subset C_w$, $C_w\subset C_v$, or $\G-C_v$ and $\G-C_w$ are disjoint. In any of these cases, the corresponding partial conjugations  $f_v$ and $f_w$ commute in $Out(\AG)$.  
  
The only other relation among the generators  of  $\widehat{\mathcal PC}$ is that for a fixed $v$,  the product of all non-trivial $\hat v$-component conjugations is an inner automorphism.  The last statement of the theorem follows. 
\end{proof}

\subsection{The amalgamated projection homomorphism $P$}

We can combine the projection homomorphisms $P_v$ for maximal equivalence classes $[v]$ in the same way we combined the restriction homomorphisms, to obtain an amalgamated projection homomorphism
$$P=\prod P_v \colon Out^0(\AG)\to\prod  Out^0(A_{lk[v]}).$$

Recall that a vertex $v$  is called leaf-like if there is a unique maximal vertex $w$ in $lk(v)$ and this vertex satisfies $[v] \leq [w]$.  The transvection $v\mapsto vw$ is called a {\it leaf transvection}.  It is proved in \cite{ChVo} that the kernel $K_P$ of $P$ is a free abelian group generated by $K_R$ and the set of all leaf transvections.


 \section{Residual finiteness}

It is easy to see using congruence subgroups that $GL(n,\Z)$ is residually finite, and E. Grossman proved that $Out(F_n)$ is also residually finite (\cite{Gr}).   In  this section we use these facts together with our restriction and exclusion homomorphisms to show that in fact $Out(\AG)$ is residually finite for every defining graph $\G$.  The same result has been obtained by A. Minasyan \cite{Mi09} by different methods.  Both proofs use  a fundamental result of Minasyan and Osin which takes care of the case when the defining graph is disconnected:   

\begin{theorem} \cite{MiOs}\label{MiOs}  If $G$ is a finitely generated, residually finite group with infinitely many ends, then $Out(G)$ is residually finite.
\end{theorem}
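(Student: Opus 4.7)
The plan follows Grossman's strategy for $\text{Out}(F_n)$, adapted to this setting. The first step is a general reduction: to prove $\text{Out}(G)$ is residually finite, it suffices to show that $G$ has the following strengthened separability property: for every $\phi \in \text{Aut}(G)$ which is not inner, there exists a finite-index characteristic subgroup $H \trianglelefteq G$ such that the automorphism of $G/H$ induced by $\phi$ is not inner. Because $H$ is characteristic, each such $H$ yields a well-defined homomorphism $\text{Out}(G) \to \text{Out}(G/H)$ with finite target, and as $H$ ranges, the resulting family of homomorphisms separates any $[\phi] \neq 1$ from the identity in $\text{Out}(G)$.

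To establish this property I would exploit the end structure. By Stallings' theorem, $G$ splits non-trivially over a finite subgroup; by Dunwoody's accessibility theorem, this refines to a finite graph-of-groups decomposition $\mathbb{G}$ for $G$ with finite edge groups and vertex groups of at most one end. By Bass--Jiang/Forester-style canonicity, $\text{Aut}(G)$ acts on $\mathbb{G}$ through a finite group, so after replacing $\text{Aut}(G)$ by a finite-index subgroup we may assume $\phi$ preserves the conjugacy class of each vertex and edge subgroup. Then either $\phi$ acts non-innerly on some vertex group, or else $\phi$ differs from an inner automorphism only by a ``Bass--Serre twist'' along finite edge groups. In the latter case, one can exhibit a hyperbolic element $g$ of the Bass--Serre tree whose conjugacy class is moved by $\phi$, and invoke residual finiteness of $G$ to separate $g$ from $\phi(g)$ in a finite characteristic quotient.

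The main obstacle is the case where $\phi$ restricts to a non-inner automorphism of a one-ended vertex group $G_v$. Since $\text{Out}(G_v)$ might itself fail to be residually finite, one cannot simply induct on the end structure. The resolution must exploit the fact that one only needs to separate the automorphism in a finite characteristic quotient of $G$, not of $G_v$ in isolation: the finiteness of the edge groups means that characteristic quotients of $G$ restrict to a sufficiently rich class of characteristic-like quotients of $G_v$, and combining residual finiteness of $G$ with the rigidity imposed by the surrounding tree-of-groups structure should convert vertex-level non-innerness into global non-innerness in an appropriate finite quotient of $G$. Carrying out this last step rigorously—essentially, showing that characteristic quotients of $G$ are fine enough to detect non-inner behaviour on each vertex group—is the technical heart of the argument.
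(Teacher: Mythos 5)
First, note that the paper does not prove this statement at all: it is quoted verbatim from Minasyan--Osin \cite{MiOs} and used as a black box, so there is no internal proof to measure your sketch against. Judged on its own terms, your opening move is correct and standard: the reduction to finding, for each non-inner $\phi$, a finite-index characteristic subgroup $H$ such that $\phi$ induces a non-inner automorphism of $G/H$ is exactly Grossman's criterion, and the appeal to Stallings and Dunwoody to obtain a splitting over finite edge groups with vertex groups having at most one end is the right starting point.

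There is, however, a genuine gap at the step where you invoke ``Bass--Jiang/Forester-style canonicity'' to conclude that $\mathrm{Aut}(G)$ acts on the Dunwoody decomposition through a finite group, so that after passing to a finite-index subgroup every automorphism preserves the conjugacy classes of the vertex and edge groups. This is false for splittings over finite (in particular trivial) edge groups: already for $G=F_2=\langle a\rangle * \langle b\rangle$ the transvection $a\mapsto ab$ does not preserve the vertex groups up to conjugacy, and no finite-index subgroup of $\mathrm{Aut}(F_2)$ does. Uniqueness of such decompositions holds only up to moves within a (large) deformation space; only the collection of maximal one-ended subgroups is $\mathrm{Aut}$-invariant, not the graph of groups itself. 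Consequently your trichotomy (non-inner on a vertex group versus a Bass--Serre twist) does not exhaust the possibilities and the ensuing case analysis collapses. Combined with the fact that you explicitly defer the ``technical heart'' --- promoting vertex-level non-innerness to non-innerness in a finite characteristic quotient of $G$ --- the proposal is a programme rather than a proof. The actual argument of Minasyan and Osin sidesteps the canonicity issue by regarding $G$ as hyperbolic relative to the vertex groups of the splitting and using Dehn-filling/small-cancellation quotients of relatively hyperbolic groups to manufacture the required finite characteristic quotients; it is that machinery, not the graph-of-groups combinatorics, that makes the theorem go through.
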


\begin{theorem}\label{RF}  For any right-angled Artin group $\AG$, $Out(\AG)$ is residually finite.
\end{theorem}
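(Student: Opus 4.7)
The plan is to induct on the number of vertices of $\G$, reducing at the outset to the finite-index subgroup $Out^0(\AG)$; since residual finiteness passes both ways through finite-index inclusions of finitely generated groups, it suffices to prove $Out^0(\AG)$ is residually finite. The cases $|\G|\le 1$ are immediate. When $\G$ is disconnected, $\AG$ is a nontrivial free product of special subgroups and therefore has infinitely many ends; since every RAAG is residually finite (being linear over $\Z$), Theorem~\ref{MiOs} applies directly.

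For connected $\G$, the plan is to combine the amalgamated restriction map $R\colon Out^0(\AG)\to \prod_{[v]}Out^0(A_{st[v]})$ with the amalgamated exclusion map $E=\prod_{[w]}E_w\colon Out^0(\AG)\to \prod_{[w]}Out^0(A_{\G-[w]})$, both products indexed over maximal equivalence classes. For every factor whose target graph has strictly fewer vertices than $\G$, the induction hypothesis gives residual finiteness. The join case, when $\G=st[v]$ for some maximal $[v]$, must be handled separately: if $\G$ is a complete graph then $Out(\AG)=GL(n,\Z)$ is residually finite by the classical congruence subgroup theorem, while if $\G$ is a proper join one substitutes the projection map $P_v\colon Out^0(\AG)\to Out^0(A_{lk[v]})$ and reduces to the smaller graph $lk[v]$.

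The main obstacle is the kernel of $R$. By Theorem~\ref{kernel}, $K_R$ is finitely generated free abelian, but central extensions of residually finite groups by free abelian groups need not be residually finite, so one cannot merely invoke residual finiteness of the image and the kernel of $R$. The plan is instead to show that $R\times E$ is injective: given a nontrivial $g\in K_R$, written as a product of $\hat v$-component conjugations modulo the relations of Theorem~\ref{kernel}, one must exhibit a maximal class $[w]$ with $E_w(g)\ne 1$. When $v\notin[w]$, a $\hat v$-component conjugation descends under $E_w$ to the corresponding $\hat v$-component conjugation in $\G-[w]$, so the hard part will be a combinatorial analysis showing that for every nontrivial $g$ one can choose $[w]$ to avoid cancellations in the image, tracking carefully how the product-is-inner relations for each vertex transform under exclusion. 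Once $R\times E$ is established to be injective, $Out^0(\AG)$ embeds into a finite product of residually finite groups and is therefore itself residually finite.
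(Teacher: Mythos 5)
Your architecture matches the paper's: induction on the number of vertices, the Minasyan--Osin theorem for disconnected $\G$, the amalgamated restriction map to detect elements outside $K_R$, and exclusion maps to detect nontrivial elements of $K_R$. However, two steps are not actually carried out, and one of them is wrong as stated. In the case $\G=st[v]$ you propose to ``substitute the projection map $P_v$ and reduce to $lk[v]$,'' but $P_v$ has a large kernel there --- it contains the transvections among and onto $[v]$, hence a copy of $GL(k,\Z)$ or of $Out(A_{[v]})$ --- and residual finiteness is not preserved by extensions; this is precisely the objection you yourself raise against naively using $R$ and its abelian kernel. The paper instead invokes the explicit splitting $Out(\AG)=Tr\rtimes(GL(A_{[v]})\times Out(A_{lk[v]}))$ when $[v]$ is abelian (resp.\ $Out(A_{[v]})\times Out(A_{lk[v]})$, up to a $\Z/2\Z$ extension, when $[v]$ is non-abelian), together with Grossman's theorem for $Out(F_n)$ and the fact that semidirect products of finitely generated residually finite groups are residually finite.

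The heart of the proof is the claim that every nontrivial $\phi\in K_R$ survives some exclusion map, and your proposal explicitly defers this (``the hard part will be a combinatorial analysis\dots''), so as written the key step is missing. Note also that your preparatory remark that a $\hat v$-component conjugation ``descends under $E_w$ to the corresponding $\hat v$-component conjugation in $\G-[w]$'' whenever $v\notin[w]$ is not true for arbitrary $[w]$: deleting vertices can split a $\hat v$-component, trivialize it, or make the conjugation inner. The paper's resolution is short but essential: write $\phi=\phi_1\circ\cdots\circ\phi_k$ with $\phi_i$ a nontrivial product of $\hat v_i$-component conjugations and the $[v_i]$ distinct, and take $[w]$ to be a maximal class adjacent to $[v_1]$. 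Then the deleted vertices lie in $lk(v_1)\subset st(v_1)$, so each nontrivial $\hat v_1$-component retains a vertex outside $st(v_1)$ and $\bar\phi_1$ remains a nontrivial, non-inner partial conjugation; the remaining factors conjugate by generators outside $[v_1]$ and cannot cancel it. You would need to supply this (or an equivalent) choice of $[w]$ and the accompanying verification before the claimed injectivity of $R\times E$ into a product of residually finite groups can be asserted.
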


\begin{proof}   Every right angled Artin group $\AG$ is finitely generated and residually finite (it's linear), and $\AG$ has infinitely many ends if and only if $\G$ is disconnected. Therefore,  by Theorem~\ref{MiOs}, we may assume that $\G$ is connected.

We proceed by induction on the number of vertices in $\G$.  

Consider first the case in which  $\G = st[v]$ for a single equivalence class $[v]$.  If $[v]$ is abelian, we know by Proposition 4.4 of \cite{ChVo}  that 
\[
Out(\AG)= Tr \rtimes (GL(A_{[v]}) \times Out(A_{\Lv}))
\]
where $Tr$ is the free abelian group generated by the leaf transvections.  Since $[v]$ is abelian, $GL(A_{[v]})=GL(k,\Z)$, which is residually finite, and $Out(A_{lk[v]})$ is residually finite by induction.  The result now follows because semi-direct products of finitely generated residually finite groups are residually finite \cite{Mi71}.   
If $[v]$ is non-abelian, then $Out(\AG)=Out(A_{[v]})\times Out(A_{lk[v]})$ (or possibly a $\Z/2\Z$-extenion of this).  Since $A_{[v]}$ is a free group, $Out(A_{[v]})$ is residually finite and $Out(A_{lk[v]})$ is residually finite by induction, so this case also follows.  

Now suppose that $\G$ is not the star of a single equivalence class.  Since $Out^0(\AG)$ has finite index in $Out(\AG)$, it suffices to prove that $Out^0(\AG)$ is residually finite.  For any maximal equivalence class $[v]$, $Out^0(st[v])$ is residually finite by induction, so any element of $Out^0(\AG)$ which maps non-trivially under $R$ is detectable by a finite group.  It remains to show that the same is true for elements in the kernel $K_R$  of $R$.  

Let $\phi$ be an element of $K_R$.  It follows from Lemma \ref{kernel} and the fact that $K_R$ is abelian that $\phi$ can be factored as
\[
\phi = \phi_1 \circ \cdots \circ \phi_k
\]
where $\phi_i$ is a product of $\hat v_i$-component conjugations, and the classes $[v_1], \dots ,[v_k]$ are distinct.
  Let $[w]$ be a maximal vertex adjacent to $[v_1]$.
Consider the image of $\phi$ under the exclusion homomorphism $E_w: Out^0(\AG) \to
Out^0(A_{\G - [w]})$.  By induction, the target group $Out^0(A_{\G-[w]})$ is residually finite, so it suffices to show that this image, $\bar\phi$,  is non-trivial.  

Write $\bar\phi=\bar\phi_1 \circ \cdots \circ \bar\phi_k$. Note that
$\bar\phi_1$ is still a nontrivial partial conjugation on $A_{\G - [w]}$ since the vertices which were removed commuted with all elements of $[v_1]$.  Moreover, for $i>1$,  the partial conjugations in $\bar\phi_i$ are either trivial, or are partial conjugations by elements distinct from $[v_1]$.  It follows that $\bar\phi$ acts non-trivially on $A_{\G - [w]}$ as required.
\end{proof}


\section{Homogeneous graphs and the Tits alternative}\label{homogeneous}

Recall that the Tits alternative for a group $G$ states that every subgroup of $G$ is either virtually solvable or contains a non-abelian free group.  Both $GL(n,\Z)$ and $Out(F_n)$ are known to satisfy the Tits alternative \cite{Ti72}, \cite{BFH00, BFH05}.  We will show that $Out(\AG)$ satisfies the Tits alternative for a large class of graphs $\G$.

\begin{definition}  Let $\G$ be a finite simplicial graph.  We say $\G$ is {\it homogeneous of dimension $0$} if it is empty, and {\it homogeneous of dimension $1$} if it is non-empty and discrete (no edges). For $n>1$, we say $\G$ is {\it homogeneous of dimension $n$} if it is connected and the link of every vertex is homogeneous of dimension $n-1$.   
\end{definition}

If $\Delta$ is a $k$-clique in $\G$ and $v$ is a vertex in $\Delta$, then the link of $\Delta$ in $G$ is equal to the link of $\Delta -v$ in $lk(v)$.  A simple inductive argument now shows that if $G$ is homogeneous of dimension $n$, then the link of any $k$-clique is  homogeneous of dimension $n-k$. In particular, every maximal clique in $\G$ is an $n$-clique (hence the terminology ``homogeneous").

 \begin{lemma}\label{galleries}
 If $\G$ is homogeneous of dimension $n>1$, then any two $n$-cliques $\alpha$ and $\beta$ are connected by a sequence of $n$-cliques $\alpha=\sigma_1, \sigma_2, \dots ,\sigma_k=\beta$
 such that $\sigma_{i-1} \cap \sigma_i$ is an $(n-1)$-clique.
 \end{lemma}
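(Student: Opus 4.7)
The plan is to prove this by induction on $n \geq 2$, using the observation (noted just before the lemma) that in a graph homogeneous of dimension $n$ the link of any $k$-clique is homogeneous of dimension $n-k$, and that every maximal clique is an $n$-clique.

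For the base case $n = 2$, the graph $\Gamma$ is connected with discrete links, i.e.\ a triangle-free connected graph, and the $n$-cliques are the edges. Given edges $\alpha$ and $\beta$, I would pick endpoints $v_0 \in \alpha$ and $v_m \in \beta$ and, using connectedness of $\Gamma$, an edge path $v_0, v_1, \ldots, v_m$ joining them. The sequence $\alpha, \{v_0,v_1\}, \{v_1,v_2\}, \ldots, \{v_{m-1}, v_m\}, \beta$ (after deleting repetitions) is a gallery since consecutive edges share the vertex $v_i$, which is a $1$-clique.

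For the inductive step with $n \geq 3$, assume the statement for graphs homogeneous of dimension $n-1$. Given $n$-cliques $\alpha, \beta$ in $\Gamma$, choose $v \in \alpha$ and $w \in \beta$ and a path $v = v_0, v_1, \ldots, v_m = w$ in $\Gamma$. For each edge $\{v_{i-1}, v_i\}$, its link is homogeneous of dimension $n - 2 \geq 1$, hence non-empty, and in particular contains an $(n-2)$-clique; so the edge extends to an $n$-clique $\tau_i$ in $\Gamma$. It then suffices to exhibit a gallery between any two $n$-cliques that share a common vertex, and concatenate these bridges across $\alpha, \tau_1, \ldots, \tau_m, \beta$ (all pairs of consecutive cliques in this list share a vertex $v_{i}$).

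To build such a bridge between $n$-cliques $\sigma, \sigma'$ sharing a vertex $u$, I would pass to $lk(u)$, which is homogeneous of dimension $n-1 \geq 2$. The sets $\sigma \setminus \{u\}$ and $\sigma' \setminus \{u\}$ are $(n-1)$-cliques in $lk(u)$, and by the inductive hypothesis they are connected in $lk(u)$ by a sequence of $(n-1)$-cliques with consecutive overlap an $(n-2)$-clique. Adjoining $u$ back to every clique in this sequence yields a gallery of $n$-cliques in $\Gamma$ from $\sigma$ to $\sigma'$, with consecutive overlap an $(n-1)$-clique.

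The main point to verify carefully is that each link arising in the argument is homogeneous of the appropriate positive dimension, so that the inductive hypothesis applies and so that the needed extensions of lower-dimensional cliques to $n$-cliques exist; but this is exactly what the remark preceding the lemma provides. The argument is otherwise a straightforward concatenation, so I do not expect a serious obstacle.
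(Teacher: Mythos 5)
Your proposal is correct and follows essentially the same route as the paper: induct on $n$, use connectedness plus the fact that every edge lies in an $n$-clique to reduce to two $n$-cliques sharing a vertex $u$, then apply the inductive hypothesis in $lk(u)$ and rejoin $u$. You simply spell out the base case and the reduction step in slightly more detail than the paper does.
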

 
 \begin{proof}  
 We proceed by induction on $n$.  For $n=2$, this is simply the statement that $\G$ is connected.  For $n>2$, since $\G$ is connected and every edge is contained in an $n$-clique, we can find a sequence of $n$-cliques from $\alpha$ to $\beta$ such that consecutive $n$-cliques share at least a vertex.  Thus is suffices to consider the case where $\alpha$ and $\beta$ share a vertex $v$.
In this case, there are  $(n-1)$-cliques $\alpha'$ and $\beta'$ in $lk(v)$ that together with $v$ span
$\alpha$ and $\beta$.
By induction, $\alpha'$ and $\beta'$ can be joined by a sequence of $(n-1)$-cliques in $lk(v)$ that intersect consecutively  in $(n-2)$-cliques.  Taking the join of these with $v$ gives the desired sequence.
\end{proof}
 
We can also express this lemma in topological terms.  If $K_{\G}$ is the flag complex associated to $\G$ (that is, the simplicial complex whose $k$-simplices correspond to the $k$-cliques of $\G$), then the lemma states that for $\G$ homogeneous,  $K_{\G}$ is a chamber complex.

\begin{examples}  (1) For $n=2$, a graph $\G$ is homogeneous if and only if  $\G$ is connected and triangle-free. 
These are precisely the RAAGs studied in \cite{CCV07}.
(2) The join of two homogeneous graphs is again homogeneous so, for example, the join of two connected, triangle-free graphs is homogeneous of degree 4.   (3) If $\G$ is the 1-skeleton of a connected triangulated $n$-manifold, then $\G$ is homogeneous of dimension $n$.
 \end{examples}

Our main concern is to be able to do inductive arguments on links of vertices; in particular, we will need such links to be connected or discrete at all stages of the induction.  It may appear that homogeneity is a stronger condition than necessary.  This is not the case.

\begin{lemma}\label{connected}  $\G$ is homogeneous of dimension $n>1$ if and only if $\G$ is connected and the link of every (non-maximal) clique is either discrete or connected.
\end{lemma}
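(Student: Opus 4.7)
The plan is to handle the two directions separately. For the forward direction, if $\G$ is homogeneous of dimension $n>1$ then $\G$ is connected by definition, and for any non-maximal clique $\Delta$ of size $k<n$, the observation just before Lemma~\ref{galleries} gives that $lk(\Delta)$ is homogeneous of dimension $n-k\geq 1$; unwinding the definition, $lk(\Delta)$ is non-empty and either discrete (when $k=n-1$) or connected (when $k\leq n-2$).

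For the reverse direction, assume $\G$ is connected with the link of every non-maximal clique discrete or connected; the plan is to induct on $|V(\G)|$. First I would observe that the hypothesis is inherited by vertex links: for any vertex $v$ and any non-maximal clique $\Delta\subset lk(v)$, one has $lk_{lk(v)}(\Delta)=lk_\G(\Delta\cup\{v\})$, and since $\Delta\cup\{v\}$ is a non-maximal clique of $\G$, the right-hand side is discrete or connected by hypothesis. So links of vertices satisfy the same assumption on a strictly smaller vertex set, readying the induction.

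The crux is to show all maximal cliques of $\G$ have the same size $n$. If maximal cliques $\alpha,\beta$ share a vertex $v$, then $\alpha\setminus\{v\}$ and $\beta\setminus\{v\}$ are maximal cliques in $lk(v)$: if $lk(v)$ is discrete this forces $|\alpha|=|\beta|=2$, and if $lk(v)$ is connected then applying the inductive hypothesis to $lk(v)$ (now known to be homogeneous) forces $|\alpha|=|\beta|$. For arbitrary maximal cliques, I would choose $v\in\alpha$, $w\in\beta$, pick a path from $v$ to $w$ in $\G$, and extend each edge of the path to a maximal clique of $\G$; this produces a chain of maximal cliques with consecutive members sharing a vertex, so transitivity of the previous step gives $|\alpha|=|\beta|$. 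This chaining argument, needed to promote the ``local'' equality of sizes to a global one, is the step I expect to be the main technical obstacle.

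Once this common size $n\geq 2$ is established, every vertex $v$ lies in some maximal $n$-clique, so the maximal cliques of $lk(v)$ all have size exactly $n-1$. If $n=2$, every $lk(v)$ is non-empty and discrete, hence homogeneous of dimension $1$, and $\G$ is homogeneous of dimension $2$. If $n>2$, then $n-1\geq 2$ means $lk(v)$ contains an edge and so cannot be discrete, hence is connected by hypothesis; the inductive hypothesis on the smaller graph $lk(v)$ then gives that $lk(v)$ is homogeneous of dimension $n-1$. In either case $\G$ is homogeneous of dimension $n>1$, as required.
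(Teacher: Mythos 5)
Your proof is correct, and your forward direction coincides with the paper's. For the converse you take a genuinely different route. The paper inducts on the maximal clique size $m$, and its key step is a local contradiction: if some vertex link were discrete (with $m>2$), connectivity of $\G$ would yield adjacent vertices $v,w$ with $lk(v)$ discrete and $lk(w)$ not; then $v$ is an isolated vertex of the non-discrete graph $lk(w)$, contradicting the hypothesis that $lk(w)$ is connected. Having thus shown all vertex links are connected, the paper applies induction to make each $lk(v)$ homogeneous and only at the very end propagates equality of their dimensions along edges of $\G$. You instead front-load the equidimensionality: you first prove all maximal cliques share a common size $n$ by chaining maximal cliques that meet in a vertex along a path in $\G$ (handling the shared-vertex case by passing to $lk(v)$ and invoking the inductive hypothesis there), after which connectivity of the links for $n>2$ comes for free, since a graph all of whose maximal cliques have size $n-1\geq 2$ contains an edge and hence cannot be discrete. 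Both arguments run on the same engine --- induction through links plus propagation along the connected graph $\G$ --- but yours replaces the paper's discrete-link contradiction with a gallery-style chaining in the spirit of Lemma~\ref{galleries}, making the ``every maximal clique is an $n$-clique'' fact explicit rather than a byproduct. Two cosmetic points: your case split for $lk(v)$ (discrete versus connected) overlaps when $lk(v)$ is a single vertex, so that case should be routed through the discrete branch; and, exactly as the paper does, you silently exclude the degenerate one-vertex graph, for which the stated equivalence fails vacuously.
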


\begin{proof}  If $\G$ is homogeneous, then so is the link of every $k$-clique, $k<n$, so by definition it is either discrete or connected.  

Conversely, assume that $\G$ is connected and the link of every non-maximal $k$-clique is either discrete or connected.  We proceed by induction on  the maximal size $m$  of a clique in $G$.  If $m=2$, then the link of every vertex (1-clique) in $\G$ is discrete and non-empty, so by definition, $\G$ is homogeneous of dimension 2.  

For $m>2$, we claim first that the link of every vertex is connected.  For if $\G$ contains some vertex with a discrete link, then there exists an adjacent pair of vertices $v,w$ such that the link of $v$ is discrete while the link of $w$ is not.  In this case, $v$ lies in $lk(w)$ but $v$ is not adjacent to any other vertex in $lk(w)$.  This contradicts the assumption that the link of $w$ is connected.  

If $\Delta$ is a $k$-clique in $lk(v)$, then $\Delta \ast v$ is a $(k+1)$-clique in $\G$.  Since the link of $\Delta$ in $lk(v)$ is equal to the link of $\Delta\ast v$ in $\G$,  it is either discrete or connected.  Thus, by induction, $lk(v)$ is homogeneous.  Moreover, every link must be homogeneous of the same dimension, for if $v,w$ are adjacent vertices, then the homogeneous dimension of $lk(v)$ and $lk(w)$ are both equal to $r-1$ where $r$ is the size of the maximal clique containing $v$ and $w$.
\end{proof}

The next lemma contains some other elementary facts about homogeneous graphs.

\begin{lemma}\label{nostar} Let $\G$ be homogeneous of dimension $n$ and assume that $\G$ is not the star of a single vertex.   Let $[v]$ be a maximal equivalence class in $\G$.
\begin{enumerate}
\item If $[v]$ is abelian, then $[v]$ is a singleton.
\item For any maximal $[v]$, $lk[v]$ is homogeneous of dimension $n-1$ and is not the star of a single vertex.
\end{enumerate}
\end{lemma}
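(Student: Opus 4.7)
The plan is to prove (1) first by contradiction, then deduce (2) from (1). For (1), I suppose $[v]$ is abelian with $|[v]| \geq 2$ and aim to contradict the hypothesis that $\G$ is not the star of a single vertex. The key step is a dichotomy: every maximal $n$-clique $\Delta$ of $\G$ either contains $[v]$ entirely or is disjoint from $[v]$. To prove this, suppose $\Delta$ contains some $v_j \in [v]$ but misses some other $v_{j'} \in [v]$. Since $v_j$ and $v_{j'}$ are abelian-equivalent, $st(v_j) = st(v_{j'})$, so $\Delta \setminus \{v_j\} \subseteq lk(v_j) \subseteq st(v_{j'})$; as $v_{j'} \notin \Delta$, every vertex of $\Delta \setminus \{v_j\}$ is in fact adjacent to $v_{j'}$. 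Combined with the edge $v_j v_{j'}$, this makes $\Delta \cup \{v_{j'}\}$ a clique of size $n+1$, contradicting the maximality of $\Delta$.

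With the dichotomy in hand, I extend $[v]$ (a clique of size at most $n$) to a maximal $n$-clique $\Delta_1 \supseteq [v]$. Since $\G$ is not the star of a single vertex and $st[v] = st(v_1)$, there is some $z \in \G \setminus st[v]$; any maximal $n$-clique $\Delta_z$ containing $z$ must be disjoint from $[v]$ by the dichotomy, because $z$ is non-adjacent to every vertex of $[v]$. Lemma~\ref{galleries} now produces a gallery $\Delta_1 = \sigma_1, \dots, \sigma_m = \Delta_z$ of maximal $n$-cliques with consecutive terms sharing an $(n-1)$-face; some consecutive pair must cross the dichotomy, so $\sigma_i \supseteq [v]$ and $\sigma_{i+1} \cap [v] = \emptyset$ for some $i$. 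Their common $(n-1)$-face $\tau = \sigma_i \cap \sigma_{i+1}$ is then disjoint from $[v]$, yet $[v] \subseteq \sigma_i = \tau \cup \{a\}$ for a single vertex $a$; this forces $|[v]| \leq 1$, the desired contradiction.

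For (2), part (1) ensures that $lk[v] = lk(v_1)$ for any $v_1 \in [v]$: if $[v]$ is abelian then it is a singleton by (1), while if $[v]$ is non-abelian (discrete) then all vertices of $[v]$ share the same link, as follows directly from the equivalence relation in that case. Hence $lk[v]$ is homogeneous of dimension $n-1$ by homogeneity of $\G$. If $lk[v]$ were the star of some $w \in lk(v_1)$, then every vertex of $lk(v_1) \setminus \{w\}$ would be adjacent to $w$, giving $lk(v_1) \subseteq st(w)$ and hence $v_1 \leq w$ in $\G$; maximality of $[v]$ would then force $w \in [v]$. But $w \in lk(v_1)$ means $w$ is adjacent to $v_1$, which contradicts singleton-ness in the abelian case and contradicts discreteness in the non-abelian case. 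The main obstacle is isolating and proving the dichotomy claim for (1); once it is established, Lemma~\ref{galleries} and the maximality of $[v]$ deliver both parts essentially mechanically.
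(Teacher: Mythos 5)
Your proof is correct and follows essentially the same route as the paper's: part (1) rests on Lemma~\ref{galleries} together with the fact that equivalent adjacent vertices have equal stars, and part (2) is the same maximality argument. The only cosmetic difference is that you run the gallery from a clique containing $[v]$ to one disjoint from $[v]$ and squeeze $[v]$ into the single vertex outside the shared $(n-1)$-face, whereas the paper takes an $n$-clique $\sigma \not\subseteq st[v]$ meeting $st[v]$ in an $(n-1)$-clique and joins that face with $[v]$; both come down to the same clique-size bound.
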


\begin{proof}  (1) 
Suppose $[v]$ is abelian and contains $k$ vertices. Then $[v]$ it spans a $k$-clique and $st[v]=st(v)$. 
By hypothesis, there is some $n$-clique $\sigma$ not contained in $st[v]$ and by Lemma~\ref{galleries}, we can choose $\sigma$ so that $\sigma \cap st[v]$ is an $(n-1)$-clique. It follows that  if $k>1$, 
then $\sigma$ contains some vertex of $[v]$ and hence every vertex of $[v]$ (since they are all equivalent),  contradicting our assumption that $\sigma$ does not lie in $st[v]$.  We conclude that $k=1$, or in other words, $[v]$ is a single point.  This proves (1).

For (2), let $[v]$ be any maximal equivalence class.  Then either $[v]$ is free, or a singleton and in either case, $lk[v]= lk(v)$, so it is homogeneous of dimension $n-1$.  If $lk[v]$ is contained in  the star of a single vertex $w\in lk[v]$, then $[v] < [w]$.  But this is impossible since $[v]$ is maximal. 
\end{proof}

We can now easily prove the main theorem of this section. 

\begin{theorem}\label{TA}  If $\G$ is homogeneous of dimension $n$, then $Out(\AG)$ satisfies the Tits alternative, that is, every subgroup of $Out(\AG)$ is either virtually solvable or contains a non-abelian free group.
\end{theorem}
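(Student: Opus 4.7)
The plan is to induct on the number of vertices of $\G$, using at each step that the subgraphs which arise are themselves homogeneous. Throughout I will rely on standard closure properties of the class $\mathcal T$ of groups satisfying the Tits alternative: $\mathcal T$ is closed under passage to subgroups, finite-index supergroups, and short exact sequences (the only nontrivial ingredient being that a surjection onto a non-abelian free group admits a free section, so $F_2$ in a quotient lifts to $F_2$ in the total group). I will also use Tits' theorem that $GL(n,\Z)\in\mathcal T$ and the Bestvina--Feighn--Handel theorem that $Out(F_n)\in\mathcal T$. Since $Out^0(\AG)$ has finite index in $Out(\AG)$ it suffices to prove $Out^0(\AG)\in\mathcal T$, and the base cases $|\G|\le 1$ are trivial. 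For the inductive step I would split into two cases according to the global structure of $\G$.

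\emph{Case 1: $\G=\Jv$ for a single maximal equivalence class $[v]$.} My first move would be to apply the decomposition of $Out(\AG)$ given by Proposition~4.4 of \cite{ChVo}. If $[v]$ is abelian of size $k$, this yields
\[
Out(\AG)\;\cong\;Tr\rtimes\bigl(GL(k,\Z)\times Out(A_{\Lv})\bigr),
\]
with $Tr$ free abelian and $GL(k,\Z)\in\mathcal T$ by Tits; the subgraph $\Lv$ is the link of the $k$-clique $[v]$ in the $n$-homogeneous graph $\G$, hence homogeneous of dimension $n-k$ with strictly fewer vertices than $\G$, so induction delivers $Out(A_{\Lv})\in\mathcal T$ and closure under extensions finishes the case. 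If $[v]$ is non-abelian (so discrete of size $k$), then up to a $\Z/2\Z$-extension $Out(\AG)\cong Out(F_k)\times Out(A_{\Lv})$, and BFH together with induction again places the group in $\mathcal T$.

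\emph{Case 2: $\G$ is not the star of any single equivalence class.} My next move would be to apply the amalgamated restriction homomorphism
\[
R\colon Out^0(\AG)\longrightarrow\prod_{[v]\text{ maximal}}Out^0(A_{\Jv}).
\]
By Theorem~\ref{kernel}, $\ker R$ is finitely generated free abelian and therefore lies in $\mathcal T$. For each maximal $[v]$ the subgraph $\Jv$ has strictly fewer vertices than $\G$ (because $\G\ne\Jv$), yet is still homogeneous of dimension $n$: indeed $\Jv$ is the join of $[v]$ and $\Lv$, Lemma~\ref{nostar}(2) identifies $\Lv$ as homogeneous of dimension $n-1$, $[v]$ is either a single vertex (by Lemma~\ref{nostar}(1)) or discrete and hence homogeneous of dimension $1$, and the join of homogeneous graphs is homogeneous of summed dimension. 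Induction then yields $Out(A_{\Jv})\in\mathcal T$ for every maximal $[v]$, so the product and its subgroup $R(Out^0(\AG))$ lie in $\mathcal T$, and closure under extensions gives $Out^0(\AG)\in\mathcal T$.

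The main obstacle is the inductive bookkeeping: at every step one must verify that the graph to which the inductive hypothesis is being applied really is a homogeneous graph of the correct dimension. For $\Jv$ in Case~2 this uses Lemma~\ref{nostar} together with the fact that joins of homogeneous graphs are homogeneous; for $\Lv$ in Case~1 it uses the link-of-a-clique observation recorded just after the definition of homogeneity. The one other subtle point is closure of $\mathcal T$ under extensions, standard but worth flagging since the lifting of $F_2$ through the extension relies on the universal property of free groups.
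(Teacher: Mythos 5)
Your proof is correct, but it takes a genuinely different route from the paper's. The paper inducts on the homogeneous dimension $n$ and finishes in one stroke with the amalgamated \emph{projection} homomorphism: the exact sequence $1\to K_P\to Out^0(\AG)\to\prod Out^0(A_{\Lv})$, the fact cited from \cite{ChVo} that $K_P$ is free abelian, the base case $Out(F_k)$, and a separate disposal of complete graphs via Tits' theorem. You instead induct on the number of vertices, split off the case $\G=\Jv$ (using the semidirect product decomposition of Proposition 4.4 of \cite{ChVo}, exactly as the paper does in its residual finiteness proof), and in the remaining case use the amalgamated \emph{restriction} homomorphism $R$, whose kernel $K_R$ is free abelian by Theorem~\ref{kernel}; the inductive step then rests on your observation that $\Jv=[v]\ast\Lv$ is again homogeneous of dimension $n$ with strictly fewer vertices (via Lemma~\ref{nostar} and the join fact). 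Your route leans only on Theorem~\ref{kernel}, which is proved in this paper, rather than on the structure of $K_P$; moreover, your explicit star case cleanly covers the configuration where $\G=\Jv$ with $[v]$ abelian of size $k\geq 2$ and $\Lv$ nonempty (e.g.\ the join of an edge with a pentagon, which is homogeneous of dimension $4$), where the kernel of $P$ contains a copy of $GL(k,\Z)$ generated by the mutual transvections within $[v]$ and is therefore not abelian --- so the paper's single exact-sequence step implicitly needs exactly the case division you supply. What the paper's route buys is brevity and an induction that terminates after $n$ steps rather than one step per vertex. One caveat on your write-up: closure of the Tits alternative under arbitrary extensions is more delicate than the $F_2$-lifting remark suggests (a virtually-solvable-by-virtually-solvable group is not obviously virtually solvable when the kernel fails to be finitely generated), but every extension you actually invoke has abelian or finite kernel, for which the argument is unproblematic and matches the paper's own appeal to ``abelian extensions.''
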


\begin{proof}  For $\G$ a complete graph, $Out(\AG)=GL(n,\Z)$ so this follows from Tits' original theorem.  
So assume this is not the case.   It suffices to prove the Tits Alternative for the finite index subgroup $Out^0(\AG)$. We proceed by induction on $n$.  For $n=1$, $\AG$ is a free group and the theorem follows from \cite{BFH00, BFH05}.  

If $n>1$, then for every maximal $[v]$, $\Lv$ is homogeneous of lower dimension, so by induction, $Out^0(A_{\Lv})$ satisfies the Tits alternative.  It is easy to verify that the Tits alternative is preserved under direct products, subgroups, and abelian extensions, so the theorem now follows from the exact sequence 
$$1 \to K_P  \to Out^0(\AG) \to \prod Out^0(A_{\Lv}). $$
 \end{proof}


\section{Solvable subgroups}

\subsection{Virtual derived length}

\begin{definition} Let $G$ be a solvable group and $G^{(i)}$ its derived series.  The {\it derived length}  of $G$ is the least $n$ such that $G^{(n)}=\{1\}$.  The {\it virtual derived length} of $G$, which we denote by $vdl(G)$, is the minimum of the derived lengths of finite index subgroups of $G$.
\end{definition}

For an arbitrary group $H$, define
$$\mu (H) = \max \{vdl(G) \mid \textrm{$G$ is a solvable subgroup of $H$}\}. $$
Note that if $H$ is itself solvable, then $\mu(H)=vdl(H)$.

The following properties of $\mu (H)$ are easy exercises.
\begin{lemma}\label{mu-basics}  
\begin{enumerate} 
\item If  $H= \prod H_i$, then $\mu(H)= \max \{\mu(H_i)\}$.
\item If $N$ is a subgroup of $H$, then $\mu(N) \leq \mu(H)$. If $[H:N] < \infty$, then $\mu(N) = \mu(H)$.
\item If $N \lhd H$ is a solvable normal subgroup of derived length $k$, then $\mu(H) \leq \mu(H/N) + k$.
\end{enumerate}
\end{lemma}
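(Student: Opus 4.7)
The plan is to handle the three items in turn, after first establishing one preliminary fact that will be used repeatedly: if $G'\le G$ has finite index, then $vdl(G')=vdl(G)$. One inequality is immediate, since every finite-index subgroup of $G'$ is a finite-index subgroup of $G$; the reverse I would verify by intersecting any finite-index subgroup of $G$ with $G'$, which decreases neither derived length nor the finite-index property.

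For (1), I would first observe that $\max_i\mu(H_i)\le \mu(H)$ is immediate, since each $H_i$ embeds in $\prod H_i$ and any solvable subgroup of $H_i$ is also a solvable subgroup of $H$ of the same virtual derived length. For the reverse, given a solvable subgroup $G\le H$, each projection $\pi_i(G)\le H_i$ is solvable, so I would pick for each $i$ a finite-index subgroup of $\pi_i(G)$ of derived length at most $\mu(H_i)$, pull these back via $\pi_i$, and intersect over the finitely many factors to produce a finite-index $G'\le G$ whose image in each $H_i$ has derived length at most $\max_i\mu(H_i)$. Since $G'$ embeds in $\prod_i\pi_i(G')$ and derived length on a direct product is the maximum of the factor derived lengths, this bounds $dl(G')$, hence $vdl(G)$, by $\max_i\mu(H_i)$.

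For (2), the bound $\mu(N)\le\mu(H)$ is immediate since a solvable subgroup of $N$ is also a solvable subgroup of $H$. For the reverse when $[H:N]<\infty$, given any solvable $G\le H$ I would form $G\cap N$; this has index at most $[H:N]$ in $G$, so the preliminary fact gives $vdl(G)=vdl(G\cap N)\le\mu(N)$, and maximizing over $G$ yields $\mu(H)\le\mu(N)$.

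For (3), given a solvable $G\le H$, I would consider the image $\overline G = G/(G\cap N) \cong GN/N$ inside $H/N$, which is solvable. I would then choose a finite-index subgroup $\overline{G}'\le\overline G$ realizing $vdl(\overline G)\le\mu(H/N)$, let $G'\le G$ be its preimage (still of finite index in $G$), and invoke the exact sequence $1\to G'\cap N\to G'\to \overline{G}'\to 1$ to bound $dl(G')\le dl(G'\cap N)+dl(\overline{G}')\le k+\mu(H/N)$; here $G'\cap N\le N$ inherits derived length at most $k$. Hence $vdl(G)\le k+\mu(H/N)$, and maximizing over $G$ gives the claim. None of the three parts presents a real obstacle; the only systematic subtlety is passing from bounds on $dl$ of a well-chosen finite-index subgroup back to $vdl$ of the ambient group, which is exactly what the preliminary observation enables.
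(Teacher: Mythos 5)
Your proof is correct, and all three parts are handled by exactly the standard arguments one would expect; the paper itself offers no proof to compare against, as it dismisses the lemma as an ``easy exercise.'' The one point worth making explicit is that in part (1) the product must be finite for the intersection of the pulled-back finite-index subgroups to remain of finite index, which is the case in every application in the paper.
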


A group $G$ has  $vdl(G)=1$ if and only if $G$ is virtually abelian, and hence  $\mu (H)=1$ if and only if every solvable subgroup of $H$ is virtually abelian.  By \cite{BFH05}, $\mu(Out(F_n))=1$ for any free group $F_n$.  

The situation for $GL(n,\Z)$ is more complicated.  Let $U_n$ denote the unitriangular matrices in  
$GL(n,\Z)$, that is, the (lower) triangular matrices with $1$'s on the diagonal.

\begin{proposition}   $\mu (U_n) =\lfloor \log_2(n-1)\rfloor +1$,  and
 $ \mu (U_n) \leq \mu(GL(n,\Z)) \leq \mu(U_n)+1.$
\end{proposition}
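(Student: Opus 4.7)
The plan is to compute $\mu(U_n)$ first by a direct derived-series calculation, and then to bound $\mu(GL(n,\Z))$ by reducing arbitrary solvable subgroups to triangularizable ones.

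For the first assertion, I start by observing that $U_n$ is a finitely generated torsion-free nilpotent group, so a standard Hirsch-length argument shows that any finite-index subgroup $H\leq U_n$ has $H^{(k)}$ of finite index in $U_n^{(k)}$, hence nontrivial whenever $U_n^{(k)}$ is. Thus $\mu(U_n)=vdl(U_n)$ equals the derived length of $U_n$ itself. To compute this, I filter $U_n$ by the normal subgroups $F_k=\{I+N : N \text{ strictly lower triangular with first } k \text{ subdiagonals zero}\}$, verify the bracket estimate $[F_i,F_j]\subseteq F_{i+j+1}$ by a direct matrix calculation, and conclude by induction on $k$ that $U_n^{(k)}\subseteq F_{2^k-1}$. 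For the reverse inclusion I exhibit explicit iterated commutators of elementary transvections realizing every subdiagonal entry of $F_{2^k-1}$. Since $F_m=\{I\}$ exactly when $m\geq n-1$, this yields $\mu(U_n)=\min\{k : 2^k-1\geq n-1\}=\lfloor\log_2(n-1)\rfloor+1$.

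For the second assertion, the inequality $\mu(U_n)\leq\mu(GL(n,\Z))$ is immediate from Lemma~\ref{mu-basics}(2). For the upper bound, let $G$ be a solvable subgroup of $GL(n,\Z)$. I invoke the Lie--Kolchin--Mal'cev theorem: the Zariski closure $\overline{G}\leq GL(n,\mathbb{C})$ is a solvable algebraic group whose identity component $\overline{G}^0$ has finite index in $\overline{G}$ and is conjugate in $GL(n,\mathbb{C})$ to a subgroup of the upper-triangular group $T_n(\mathbb{C})=D_n(\mathbb{C})\ltimes U_n(\mathbb{C})$. Hence $G_0:=G\cap\overline{G}^0$ has finite index in $G$ and is, abstractly, a subgroup of $T_n(\mathbb{C})$. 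Then $N:=G_0\cap U_n(\mathbb{C})$ is normal in $G_0$ with abelian quotient, and $N$ has derived length at most that of $U_n(\mathbb{C})$, which equals $\mu(U_n)$ since the filtration argument is ring-independent. Lemma~\ref{mu-basics}(3), together with the fact that $\mu$ agrees with $vdl$ on solvable groups, then yields $vdl(G)=vdl(G_0)\leq\mu(U_n)+1$, whence $\mu(GL(n,\Z))\leq\mu(U_n)+1$.

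The main obstacle is the sharp computation of the derived length of $U_n$, specifically the lower bound. The filtration gives $U_n^{(k)}\subseteq F_{2^k-1}$ cheaply, but to rule out further collapse of the derived series one must verify that iterated commutators of elementary transvections actually reach every subdiagonal of $F_{2^k-1}$ at the $k$-th derived stage. This is driven by the identity $[I+aE_{ij},I+bE_{jl}]=I+ab\,E_{il}$ modulo deeper subdiagonals, but requires careful combinatorial bookkeeping to track how the subdiagonal index $i-l$ doubles at each successive commutator step.
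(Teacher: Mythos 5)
Your proposal is correct and follows essentially the same route as the paper: the subdiagonal filtration for the upper bound on the derived length of $U_n$, iterated commutators of elementary matrices for the sharp lower bound, and a virtual triangularization of solvable subgroups of $GL(n,\Z)$ for the final inequality. The only (harmless) differences are that you treat finite-index subgroups of $U_n$ via the standard nilpotent-group fact that $H^{(k)}$ has finite index in $U_n^{(k)}$, where the paper instead observes directly that such $H$ contains the matrices $e_{i,j}^m$ and applies $[e_{i,k}^m,e_{k,j}^m]=e_{i,j}^{m^2}$, and that you triangularize over $\mathbb{C}$ via Lie--Kolchin where the paper cites Mal'cev's theorem placing the subgroup virtually inside $T_n(\mathcal{O})$ for a ring of algebraic integers.
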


\begin{proof}  It is easy to
verify that $U_n(R)$ is solvable with derived length less than $log_2(n) +1$ for any ring $R$.  Let $e_{i,j}^a$ denote the elementary matrix with $a$ in the $(i,j)$-th entry.  For any finite index subgroup $G$ of $U_n$, there exists $m \in \Z$ such that $G$ contains all of the elementary matrices $e_{i,j}^m$ with $ i>j$.  The relation 
$$[e_{i,k}^m,e_{k,j}^m]=e_{i,j}^{m^2}$$
then implies that the $k$th commutator subgroup $G^{(k)}$ contains all of  the elementary matrices of the form $e_{i,j}^a$ with $i \geq j + 2^k$ and $a = m^{(2^k)}$. In particular, $G^{(k)}$ is non-trivial if 
$2^k < n$.  Thus the derived length of $G$ satisfies $\log_2(n)\leq dl(G)\leq dl(U_n) < log_2(n)+1$, which translates to the first statement of the proposition.  

The  first inequality of the second statement follows from Lemma \ref{mu-basics}(2). 
 For the second inequality, we use a theorem of Mal'cev \cite{Ma56}, which implies that every solvable subgroup $H \subset GL(n,\Z)$ is virtually isomorphic to a subgroup of $T_n(\mathcal O)$, the lower triangular matrices over the ring of integers $\mathcal O$ in some number field. 
 The first commutator subgroup of
 $T_n(\mathcal O)$ lies in $U_n(\mathcal O)$, so $vdl(H)\leq dl(T_n(\mathcal O))\leq dl(U_n(\mathcal O)) + 1=\mu(U_n)+1$.    
\end{proof}

\begin{remark}  The exact relation between $\mu(U_n)$ and $\mu(GL(n,\Z)$ is not completely clear.  Dan Segal has shown us examples demonstrating that  $\mu(GL(n,\Z))=\mu(U_n)+1$ for $n = 1 + 3\cdot 2^t$, while for $n= 1+ 2^t$ he shows   $\mu(U_n)=\mu(GL(n,\Z))$ \cite{Segal}.
\end{remark}

\subsection{Maximum derived length for   homogeneous graphs}
In the case of a homogeneous graph $\G$, it is easy to obtain an upper bound on the virtual derived length  of solvable subgroups of $Out(\AG)$: 

\begin{theorem}\label{mu}  Suppose $\G$ is homogeneous of dimension $n$.   Then $\mu (Out(\AG)) \leq n$.  
 \end{theorem}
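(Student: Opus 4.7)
The proof proceeds by induction on $n$, paralleling the strategy of Theorem~\ref{TA}. Since $Out^0(\AG)$ has finite index in $Out(\AG)$, Lemma~\ref{mu-basics}(2) reduces matters to showing $\mu(Out^0(\AG)) \leq n$. The base case $n=1$ is immediate: $\G$ is discrete, $\AG$ is free, and $\mu(Out(F_k)) = 1$ by \cite{BFH05}.

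For $n \geq 2$, I would first dispose of the case where $\G$ is the complete graph on $n$ vertices: then $\AG = \Z^n$ and $Out(\AG) = GL(n,\Z)$, and the preceding Proposition yields $\mu(GL(n,\Z)) \leq \mu(U_n) + 1 = \lfloor \log_2(n-1) \rfloor + 2$, which is readily checked to be at most $n$ for all $n \geq 2$.

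For $\G$ not complete, the engine is the exact sequence
\[
1 \to K_P \to Out^0(\AG) \to \prod_{[v]\text{ maximal}} Out^0(A_{lk[v]})
\]
coming from the amalgamated projection $P$. Here $K_P$ is free abelian (hence of derived length at most $1$) by Theorem~\ref{kernel} and the discussion in Section~3.2. The key input is that for every maximal $[v]$, the link $lk[v]$ is homogeneous of dimension at most $n-1$, so that by induction $\mu(Out^0(A_{lk[v]})) \leq n-1$. When $\G$ is not the star of a single vertex, this is exactly Lemma~\ref{nostar}(2); when $\G = st(w)$ for some single vertex $w$, I would verify directly that the unique maximal class $[w]$ is the clique of all cone points of $\G$, so that $lk[w]$ is the link of this clique and hence homogeneous of dimension $n - |[w]| \leq n-1$ by the general fact that the link of a $k$-clique in a homogeneous graph of dimension $n$ is homogeneous of dimension $n-k$. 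Lemma~\ref{mu-basics}(1) and (2) then bound $\mu(Out^0(\AG)/K_P)$ by $\max_{[v]} \mu(Out^0(A_{lk[v]})) \leq n-1$, and Lemma~\ref{mu-basics}(3) applied to $K_P$ finally gives $\mu(Out^0(\AG)) \leq (n-1) + 1 = n$.

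The main obstacle I foresee is the transition between the generic case (handled cleanly by Lemma~\ref{nostar}) and the star case $\G = st(w)$, where Lemma~\ref{nostar} does not directly apply and one must identify the maximal equivalence class and verify the homogeneity of its link by hand. Apart from that point, the argument is bookkeeping built on the structural results already established in Sections~2 and~3.
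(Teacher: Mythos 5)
Your argument follows the paper's proof of Theorem~\ref{mu} essentially line for line: reduce to $Out^0(\AG)$, induct on $n$ via the exact sequence $1 \to K_P \to Out^0(\AG) \to \prod Out^0(A_{\Lv})$, use Lemma~\ref{mu-basics} with the abelian kernel $K_P$, and handle the complete graph separately through the unitriangular estimate. You also correctly spotted the one point the paper glosses over, namely that Lemma~\ref{nostar} is stated only for $\G$ not the star of a single vertex, and your identification of $[w]$ as the clique of cone points with $lk[w]$ homogeneous of dimension $n-|[w]|$ is correct.

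However, your patch for the star case does not close the gap, because the real problem there is not the dimension of $lk[w]$ but the structure of $K_P$. The statement that $K_P$ is free abelian, generated by $K_R$ and the leaf transvections, is proved in \cite{ChVo} under the hypothesis that $\G$ is not the star of a single equivalence class. When $\G = st[w]$ (and is not complete), the unique maximal class is $[w]$, so $P$ is the single exclusion map $E_w$, and its kernel contains all of $GL(A_{[w]})$ together with the leaf transvections: writing $k=|[w]|$, one has $A_\G \cong \Z^k \times A_{lk[w]}$ and $K_P \supseteq Tr \rtimes GL(k,\Z)$. For $k \geq 2$ this is not even solvable (e.g.\ $\G$ the join of an edge with a $4$-cycle is homogeneous of dimension $4$ and equal to the star of either cone point), and even for $k=1$ the inversion of $w$ acts nontrivially on $Tr$, so $K_P$ has derived length $2$ and Lemma~\ref{mu-basics}(3) with derived length $1$ would only give $\mu \leq n+1$. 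The correct repair is to abandon $K_P$ in this case and instead use the decomposition $Out(\AG) = Tr \rtimes \bigl(GL(A_{[w]}) \times Out(A_{lk[w]})\bigr)$ from Proposition 4.4 of \cite{ChVo} (exactly as is done in the star case of the proof of Theorem~\ref{RF}): applying Lemma~\ref{mu-basics}(3) with $N = Tr$ gives $\mu(Out(\AG)) \leq 1 + \max\{\mu(GL(k,\Z)),\, \mu(Out(A_{lk[w]}))\} \leq 1 + \max\{k,\, n-k\} \leq n$, using $1 \leq k \leq n-1$ and the induction hypothesis on $lk[w]$.
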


\begin{proof}  If $\G$ is a complete graph, $Out(\AG)\cong GL(n,\Z)$, which has virtual derived length $\mu(GL(n,\Z)) < log_2(n) +2 \leq n+1$. So we may assume $\G$ is not a complete graph, in which case it contains a maximal vertex $v$ with  $lk[v]$ non-empty.  

 Since $Out^0(\AG)$  has finite index in $Out(\AG)$, their maximal derived lengths $\mu$ agree.    We proceed by induction on $n$.  For $n=1$, $\G$ is discrete so $\mu (Out(\AG))=1$ by \cite{BFH05}.  For $n>1$,  we apply Lemma \ref{mu-basics} to the abelian extension,
$$ 1 \to K_P \to Out^0(\AG) \to \prod Out^0(A_{\Lv}) $$
to conclude that $\mu(Out^0(\AG)) \leq 1+\max \{ \mu(Out^0(A_{\Lv})\}$.  By Lemma  \ref{nostar}, $\Lv$ is homogeneous of dimension $n-1$, so the theorem follows by induction.
\end{proof}

Here is a stronger formulation of the previous theorem.
If $\G$ is homogeneous of dimension $n$ and $\Delta$ is an $(n-1)$-clique, then $lk(\Delta)$  is discrete, hence generates a free group $F(lk(\Delta))$.

\begin{theorem}  Let $\G$ be homogeneous of dimension $n \geq 2$.  Then there is a homomorphism
$$Q:  Out^0(\AG) \to \prod Out (F(lk(\Delta))), $$
where the product is taken over some collection of $(n-1)$-cliques, such that the kernel of $Q$ is a solvable group of derived length at most $n-1$.
\end{theorem}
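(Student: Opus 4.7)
The plan is to iterate the amalgamated projection homomorphism $P$ from Section 3.2 a total of $n-1$ times, obtaining $Q$ as a composition whose successive kernels are all abelian. I argue by induction on $n$.

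For the base case $n=2$, the graph $\G$ is homogeneous of dimension $2$, i.e., connected and triangle-free, so for each maximal equivalence class $[v]$ the link $lk[v]$ is discrete and $A_{lk[v]} = F(lk[v])$ is already a free group. Setting $Q = P$ and taking as our collection of $1$-cliques a single chosen representative from each maximal equivalence class, the kernel is exactly $K_P$, which is free abelian by Theorem~\ref{kernel} together with the description of leaf transvections at the end of Section 3.2. Hence $\ker Q$ has derived length $1 = n-1$.

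For the inductive step $n>2$, I apply the amalgamated projection
\[
P : Out^0(\AG) \to \prod_{[v]\ \mathrm{maximal}} Out^0(A_{lk[v]}).
\]
By Lemma~\ref{nostar}(2), each $lk[v]$ is homogeneous of dimension $n-1 \geq 2$ and is not the star of a single vertex, so the inductive hypothesis supplies a homomorphism
\[
Q_{[v]} : Out^0(A_{lk[v]}) \to \prod_{\Delta'} Out\bigl(F(lk_{lk[v]}(\Delta'))\bigr)
\]
over some collection of $(n-2)$-cliques $\Delta'$ in $lk[v]$, whose kernel is solvable of derived length at most $n-2$. For each such $\Delta'$, the set $\Delta = \{v\} \cup \Delta'$ (for any chosen $v \in [v]$) is an $(n-1)$-clique in $\G$ satisfying $lk_\G(\Delta) = lk_{lk[v]}(\Delta')$, so after this relabeling the composition
\[
Q = \Bigl(\prod_{[v]} Q_{[v]}\Bigr)\circ P : Out^0(\AG) \to \prod_{\Delta} Out\bigl(F(lk(\Delta))\bigr)
\]
has target of the desired form. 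Its kernel fits into an exact sequence
\[
1 \to K_P \to \ker Q \to \prod_{[v]} \ker Q_{[v]},
\]
which exhibits $\ker Q$ as an extension of a solvable group of derived length at most $n-2$ by an abelian group. Using the standard fact that such an extension is solvable of derived length at most $(n-2)+1$, we conclude $\ker Q$ has derived length at most $n-1$.

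The main obstacle, I expect, is the handling of the boundary cases in which Lemma~\ref{nostar} does not directly apply. If $\G$ is a star of a single vertex, a maximal abelian equivalence class may contain several vertices and $lk[v]$ may have dimension strictly less than $n-1$, which breaks the clean descent; and if $\G$ is a complete graph, then $Out^0(\AG)$ is a finite index subgroup of $GL(n,\Z)$, which is not itself solvable for $n\geq 2$. These cases must either be excluded from the hypothesis or dispatched by a preliminary reduction in the spirit of the opening of the proof of Theorem~\ref{mu}, in which the complete case is handled via the preceding proposition bounding $\mu(GL(n,\Z))$ and the star case is reduced to the recursion through the semidirect product decomposition of $Out(\AG)$ recalled in the proof of Theorem~\ref{RF}.
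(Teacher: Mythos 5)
Your argument is essentially the paper's proof: base case $Q=P$ with abelian kernel $K_P$, inductive step composing $P$ with the maps $Q_{[v]}$ supplied by induction on the links $lk[v]$ (homogeneous of dimension $n-1$ by Lemma~\ref{nostar}), and the exact sequence $1 \to K_P \to \ker Q \to \prod \ker Q_{[v]}$ to add one to the derived length. Your worry about the degenerate cases is well founded and is \emph{not} addressed in the paper's proof: for a complete graph the statement actually fails, since every $(n-1)$-clique has a single vertex as its link, so the target of any such $Q$ is a product of copies of $Out(\Z)\cong \Z/2\Z$ and the kernel is a finite-index (hence non-solvable) subgroup of $GL(n,\Z)$; similarly, if $\G$ is the star of a single abelian equivalence class of size $k\geq 2$, then $K_P$ contains a copy of $GL(k,\Z)$ and again no $Q$ of the prescribed form can have solvable kernel. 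So these cases cannot be ``dispatched'' -- they must be excluded from the hypothesis (equivalently, one needs $\G$ not to be the star of a single abelian class so that Lemma~\ref{nostar} applies at every stage of the induction); with that caveat your proof coincides with the intended one.
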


\begin{proof}  Induction on $n$.  For $n=2$, take $Q=P$.  The kernel $K_P$ is abelian.  

Suppose $n>2$.  Then $P$ maps 
$Out^0(\AG)$ to a product of groups $Out^0(A_{\Lv})$, where $\Lv$ is homogeneous of dimension $n-1$.  By induction,  there exists a homomorphism $Q_{v}$ from $Out^0(A_{\Lv})$ to a product of groups $Out (F(lk_{v}(\Delta)))$ where $\Delta$ is an $n-2$ clique in $lk(v)$ and $lk_{v}(\Delta)$ is its link.   The kernel $H_v$ of $Q_v$ is solvable of derived length at most $n-2$.  

Let $\Delta'=\Delta \ast v$.  Then $ \Delta'$ is an $(n-1)$-clique in $\G$ whose link $lk(\Delta')$ is exactly $lk_{v}(\Delta)$.  Thus the composite $Q= (\prod Q_v) \circ P$ gives the desired homomorphism. The kernel of $Q$ fits in an exact sequence
$$ 1 \to K_P \to ker~Q \to \prod H_v. $$
It follows that $ker~Q$ is solvable of derived length at most $n-1$.
\end{proof}

\subsection{Examples of solvable subgroups}

We now investigate lower bounds on the virtual derived length of $Out(\AG)$.  If $[v]$ is an abelian equivalence class with  $k$ elements,   then $GL(k,\Z)$ embeds as a subgroup of $Out(\AG)$; in particular, $Out(\AG)$ contains solvable subgroups of virtual derived length at least $log_2(k)$.    In homogeneous graphs, abelian equivalence classes have only one element, so one cannot construct non-abelian solvable subgroups in this way. However, non-abelian solvable subgroups do exist, and we show two ways of constructing them in this section.  More examples may be found in \cite{Da09}.

\begin{proposition}\label{lower1}    Let $\G$ be any finite simplicial graph.  Suppose $\G$ contains $k$ distinct vertices, $v_1, \dots ,v_k$ satisfying
\begin{enumerate}
\item $v_2, \dots ,v_k$ span a $(k-1)$-clique
\item $[v_1] \leq [v_2] \leq \dots \leq [v_k]$
\end{enumerate}
Then $Out(\AG)$ contains a subgroup isomorphic to the unitriangular group $U_{k}$.  In particular, 
$\mu (Out(\AG)) \geq log_2(k)$.
\end{proposition}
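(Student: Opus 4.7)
The plan is to embed the lower unitriangular group $U_k$ into $Out(\AG)$ by an explicit formula.  For a matrix $M = (m_{ji}) \in U_k$, I would define $\phi_M \in Aut(\AG)$ by
$$\phi_M(v_i) = v_i \cdot \prod_{j=i+1}^{k} v_j^{m_{ji}} \qquad (i = 1, \dots, k),$$
and $\phi_M(v) = v$ for every vertex generator $v \notin \{v_1, \dots, v_k\}$.  The products are unambiguous because $v_2, \dots, v_k$ span a clique and hence pairwise commute in $\AG$.

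To see that each $\phi_M$ is an automorphism, note that it is a composition of transvections of the form $v_i \mapsto v_i v_j^{m_{ji}}$.  By Laurence's theorem such a transvection is an automorphism provided $v_j$ commutes with $lk(v_i)$, and this follows from $[v_i] \leq [v_j]$, which gives $lk(v_i) \subseteq st(v_j)$.

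The main step, and the principal obstacle, is to verify that $\phi \colon U_k \to Aut(\AG)$ is a homomorphism.  Expanding $\phi_M(\phi_N(v_i))$ and collecting powers of each $v_l$ — using that $v_2, \dots, v_k$ pairwise commute, while $v_1$ only appears as the leftmost factor in the image of $v_1$ — one finds that the exponent of $v_l$ in $\phi_M(\phi_N(v_i))$ is
$$m_{li} + n_{li} + \sum_{i < j < l} m_{lj} n_{ji} = (MN)_{li},$$
so $\phi_M \circ \phi_N = \phi_{MN}$.  This calculation is the heart of the proof; once it is set up, it is a direct application of matrix multiplication, and the clique hypothesis is precisely what makes it go through.

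Finally, I would descend to $Out(\AG)$ and check injectivity via the abelianization homomorphism $\rho \colon Aut(\AG) \to GL(n, \Z)$, where $n = |V(\G)|$.  Inner automorphisms act trivially on the abelianization, so $\rho$ factors through $Out(\AG)$, and by construction the composition $U_k \to Aut(\AG) \to GL(n, \Z)$ sends $M$ to the block matrix $M \oplus I_{n-k}$.  This is the standard embedding of $U_k$ into $GL(n, \Z)$, which is injective, so the induced map $U_k \to Out(\AG)$ is also injective, exhibiting the required copy of $U_k$.  The ``in particular'' statement then follows from the previous proposition and Lemma~\ref{mu-basics}(2): $\mu(Out(\AG)) \geq \mu(U_k) = \lfloor \log_2(k-1)\rfloor + 1 \geq \log_2 k$.
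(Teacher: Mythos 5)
Your proposal is correct and follows essentially the same route as the paper: both exhibit the copy of $U_k$ via the transvections $v_i \mapsto v_i v_j$ (legitimate because $[v_i]\leq[v_j]$, and composable in the stated way because $v_2,\dots,v_k$ span a clique), and both detect injectivity by passing to the abelianization, where the subgroup becomes the standard unitriangular block. The only cosmetic difference is that you write down an explicit homomorphism $U_k \to Aut(\AG)$ and push it to $Out(\AG)$, whereas the paper takes the subgroup generated by the $\alpha_i$ and shows abelianization maps it isomorphically onto $U_k$; the content is the same.
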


\begin{proof}  Let $\alpha_i$ denote the transvection $v_{i} \mapsto v_{i}v_{i+1}$.  Let $H$ denote the subgroup of $Out(\AG)$ generated by $\alpha_i$, $1 \leq i \leq k-1$.  Since $H$ preserves the subgraph $\G'$ spanned by the $v_i$'s, it restricts to a subgroup of $Out(A_{\G'})$.  It is easy to see that this restriction maps $H$  isomorphically onto its image, so without loss of generality, we may assume that $\G=\G'$.  

Abelianizing $\AG$ gives a map $\rho$ from $H$ to $GL(k,\Z)$.  The image of $\alpha_i$ under $\rho$  is the elementary matrix $e^1_{i+1,i}$. It follows that the image of $H$ in $GL(k,\Z)$ is precisely $U_k$.  Thus, it suffices to verify that the kernel of $\rho$ is trivial.  Let $\Delta$ be the clique spanned by $v_2, \dots ,v_k$.  Note that an element  of $H$ takes each $v_i$ to $v_iw_i$ for some $w_i \in A_\Delta$.  Since $A_\Delta$ is already abelian, a non-trivial $w_i$ cannot be killed by abelianizing $\AG$. Thus, $\rho$ is injective.  
\end{proof}

\begin{proposition}\label{lower2}    Let $\G$ be any finite simplicial graph.  Suppose $\G$ contains $k-1$ distinct  vertices, $v_1, \dots ,v_{k-1}$ satisfying
\begin{enumerate}
\item $v_1, \dots ,v_{k-1}$ span a $(k-1)$-clique
\item $[v_1] \leq \dots \leq [v_{k-1}]$
\item $\G - st(v_1)$ has at least two distinct components that are not contained in $st(v_{k-1})$
\end{enumerate}
Then $Out(\AG)$ contains a subgroup isomorphic to $U_{k}$. In particular, 
$\mu (Out(\AG)) \geq log_2(k)$.
\end{proposition}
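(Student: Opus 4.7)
My plan is to extend the strategy of Proposition~\ref{lower1}, exhibiting $U_k$ as a subgroup of $Out(\AG)$ by supplementing the $k-2$ available transvections $\alpha_i\colon v_i\mapsto v_iv_{i+1}$ (provided by~(2)) with a single partial conjugation. By~(3) pick $u_j\in C_j\setminus st(v_{k-1})$ for $j=1,2$; the chain $lk(v_1)\subseteq st(v_2)\subseteq\cdots\subseteq st(v_{k-1})$ then forces $u_j\notin lk(v_i)$ for every $i$. Let $\beta$ be the partial conjugation by $v_1$ on $C_1$, which is non-trivial in $Out(\AG)$ because $C_2$ supplies a second non-trivial $\hat v_1$-component. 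The candidate subgroup is $H=\langle\alpha_1,\dots,\alpha_{k-2},\beta\rangle$.

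For each $j=1,\dots,k-1$, let $\beta_j$ denote the partial conjugation of $C_1$ by $v_j$. This is a well-defined automorphism of $\AG$ because any vertex of $\G\setminus C_1$ adjacent to $C_1$ lies in $st(v_1)\subseteq st(v_j)$, so $v_j$ centralizes it. A direct computation on generators yields the identities $[\alpha_j,\beta_j]=\beta_{j+1}$ and $[\alpha_j,\beta_i]=1$ for $i\neq j$, so inductively $\beta_1=\beta,\beta_2,\dots,\beta_{k-1}$ all lie in $H$.

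The technical heart of the proof, and the step I expect to be the main obstacle, is to show that $N:=\langle\beta_1,\dots,\beta_{k-1}\rangle$ is free abelian of rank $k-1$. The $\beta_j$ commute pairwise and lie in $K_R$; by Theorem~\ref{kernel} each $\beta_j$ expresses as a sum of $\hat v_j$-component conjugations supported inside $C_1$, and the only relation in $K_R$ among $\hat v_j$-generators is that the sum of all non-trivial $\hat v_j$-components is inner. Any path from $u_2$ to $C_1$ must cross an edge of $st(v_1)\subseteq st(v_j)$, so the $\hat v_j$-component containing $u_2$ is non-trivial and disjoint from $C_1$; hence $\beta_j$ is a proper sub-sum and non-zero in $K_R$. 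Since relations in $K_R$ for different vertices are independent, the same argument applied one vertex at a time shows $\beta_1,\dots,\beta_{k-1}$ are linearly independent, so $N\cong\Z^{k-1}$.

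To finish, let $H':=\langle\alpha_1,\dots,\alpha_{k-2}\rangle$. Proposition~\ref{lower1} applied to $v_1,\dots,v_{k-1}$ gives $H'\cong U_{k-1}$. Since the abelianization of $\AG$ kills $N$ but is injective on $H'$, we have $H'\cap N=\{1\}$; together with $[\alpha_j,\beta_i]\in N$ this yields $H=N\rtimes H'$. The conjugation action of $\alpha_j$ on the ordered basis $(\beta_1,\dots,\beta_{k-1})$ of $N$ is precisely the elementary matrix $I+E_{j+1,j}\in GL(k-1,\Z)$, which is exactly the natural $U_{k-1}$-action on the first column below the diagonal in the decomposition $U_k\cong\Z^{k-1}\rtimes U_{k-1}$. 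Therefore $H\cong U_k$, and the bound on $\mu$ follows from $\mu(U_k)=\lfloor\log_2(k-1)\rfloor+1\geq\log_2 k$.
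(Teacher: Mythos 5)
Your overall architecture coincides with the paper's: the subgroup is generated by the $k-2$ adjacent transvections together with the partial conjugations $\beta_1,\dots,\beta_{k-1}$ of a single component $C_1$ by $v_1,\dots,v_{k-1}$, and one identifies $H\cong \Z^{k-1}\rtimes U_{k-1}\cong U_k$. Your commutator identities, the verification that $H'\cap N=\{1\}$ via the abelianization, and the final matrix bookkeeping are all correct. (The paper simply includes all the $\beta_i$ in the generating set rather than producing them as commutators of $\beta_1$ with the $\alpha_j$; that difference is cosmetic.)

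However, the step you yourself flag as the technical heart contains a genuine error: the $\beta_j$ need not lie in $K_R$, so Theorem~\ref{kernel} does not apply to them. The precise point of failure is your claim that ``any path from $u_2$ to $C_1$ must cross an edge of $st(v_1)\subseteq st(v_j)$.'' Such a path must pass through a \emph{vertex} of $st(v_1)$, but it can enter and leave that vertex along edges each having one endpoint outside $st(v_j)$; such edges are not edges of $st(v_j)$, so $u_2$ may lie in the same $\hat v_j$-component as points of $C_1$, and then $\beta_j$ is not a product of $\hat v_j$-component conjugations. Concretely, take $k=3$ and $\G$ with vertices $v_1,v_2,w,x,y$ and edges $v_1v_2$, $v_1w$, $v_2w$, $wx$, $wy$: all three hypotheses hold with $C_1=\{x\}$, $C_2=\{y\}$, yet the path $y,w,x$ avoids every edge of $st(v_2)$, so $x$ and $y$ lie in the unique non-trivial $\hat v_2$-component; indeed here the only maximal class is $[w]$ with $st[w]=\G$, so $K_R$ is trivial while $\beta_2\neq 1$. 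Thus $\beta_2\notin K_R$ and your linear-independence argument collapses. The fact you need ($N\cong\Z^{k-1}$) is still true, and the points $u_1\in C_1-st(v_{k-1})$ and $u_2\in C_2-st(v_{k-1})$ you already chose are exactly the right tool, used directly rather than through $K_R$: if $\beta_1^{a_1}\cdots\beta_{k-1}^{a_{k-1}}$ were conjugation by $h$, then $h$ centralizes $u_2$ and $g^{-1}h$ centralizes $u_1$, where $g=v_1^{a_1}\cdots v_{k-1}^{a_{k-1}}$; since no $v_i$ lies in $st(u_1)$ or $st(u_2)$, comparing total $v_i$-exponents gives $a_i=0$ for all $i$. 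This is in effect the (tersely stated) justification behind the paper's assertion that $H_\beta\cong A_\Delta$.
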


\begin{proof}  For $i= 1, \dots k-2$, take $\alpha_i$,  to be the transvection $v_{i} \mapsto v_{i}v_{i+1}$. Let $C$ be a component of $\G - st(v_1)$ which is not contained in $st(v_{k-1})$.  For $i=1, \dots, k-1$, take $\beta_i$ to be the partial conjugation of $C$ by $v_i$.  Note that $\beta_i$ is non-trivial in $Out(\AG)$ since condition (3) guarantees that $\G-st(v_i)$ contains at least two components.  Let $H$ denote the subgroup of $Out(\AG)$ generated by the $\alpha_i$'s and $\beta_i$'s.  We claim that $H$ is isomorphic to $U_{k}$.

Since $\alpha_i$ acts only on vertices in $st(v_1)$ while $\beta_i$ acts only on vertices not in $st(v_1)$, the subgroups $H_{\alpha}$ and $H_{\beta}$ generated by the $\alpha_i$'s and $\beta_i$'s respectively, are disjoint and  $H_{\beta}$ is easily seen to be normal in $H$.  Hence $H$ is the semi-direct product , $H = H_{\beta} \ltimes H_{\alpha}$.  The subgroup $H_{\alpha}$ is isomorphic to $U_{k-1}$, as shown in the proof of the previous proposition, while $H_{\beta}$ is isomorphic to the free abelian group $A_\Delta$ generated by the clique $\Delta$ spanned by the $v_i$'s.   It is now easy to verify that $H$ is isomorphic to $U_{k}$ as claimed.
\end{proof}

\begin{example}  Suppose $\AG$ is homogeneous of dimension 2.  Then by Theorem \ref{mu},
$\mu(Out(\AG)) \leq 2$. In the next section (Corollary \ref{2Dim}), we will show that if $\G$ has no leaves, then $\mu(Out(\AG)) = 1$.  If $\G$ does have leaves, and for some leaf $v$, $st(v)$ separates $\G$, then
$\mu(Out(\AG)) = 2$. For if the components of $\G-st(v)$ are not leaves, then Proposition~\ref{lower2} implies that  $\log_2(3) \leq \mu(Out(\AG))$, and if some component is a leaf $v'$ attached at the same base $w$,  then Proposition~\ref{lower1} applied to $[v] \leq [v'] \leq [w]$ gives the same result. 
 \end{example}

\subsection{Translation lengths and solvable subgroups}

In constructing the non-abelian solvable subgroups above, a key role was played by transvections $v \to vw$ between adjacent (i.e. commuting) vertices.  We call these {\it adjacent transvections}.  In this section, we will show that without adjacent transvections, no such subgroups  can exist for homogenous graphs.

Recall that $Out(\AG)$ is generated by the finite set $S$ consisting of graph symmetries, inversions, partial conjugations and transvections. Define the following subsets of $S$,
\begin{align*}
\tilde S &=  S - \{ \mathrm{adjacent~transvections} \} \\
\tilde S^0 &= \tilde S - \{ \mathrm{graph~symmetries} \} 
\end{align*}
and the subgroups of $Out(\AG)$ generated by them,
\begin{align*}
\widetilde Out(\AG) &=\langle  \tilde S \rangle \\
\widetilde Out^0(\AG) &=\langle \tilde S^0 \rangle. 
\end{align*}
We will prove that for $\G$ homogeneous, all solvable subgroups of $\widetilde Out(\AG)$ are virtually abelian. 

The proof proceeds by studying the translation lengths of infinite-order elements.  The connection between solvable subgroups and translation lengths was first pointed out by Gromov \cite{Gromov}. 
\begin{definition} Let $G$ be a group with finite generating set $S$, and let $\|g\|$ denote the word length of $g$ in $S$.  The {\it translation length} $\tau(g)=\tau_{G,S}(g)$ is the limit
$$\lim_{k\to\infty}\frac{ \| g^k\|}{k}.$$
\end{definition}

Elementary properties of translation lengths include the following (see \cite{GS}, Lemma 6.2):
\begin{itemize}
\item  $\tau(g^k)=k\tau(g)$.
\item  If $S'$ is a different finite generating set, then $\tau_{G,S}(g)$ is positive if and only if $\tau_{G,S'}(g)$ is positive.  
\item If $H\leq G$ is a finitely-generated subgroup, and the generating set for $G$ includes the generating set for $H$, then $\tau_H(h)\geq \tau_G(h)$.  
\end{itemize}

Note that the kernel $K_R$ of the amalgamated restriction homomorphism lies in $\widetilde Out^0(\AG)$ since it it generated by products of partial conjugations.
\begin{proposition}\label{translation} Assume $\G$ is connected.  Then
 every element of the kernel $K_R$ of the amalgamated restriction homomorphism has positive translation length in $\widetilde Out^0(\AG)$.
\end{proposition}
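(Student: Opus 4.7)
The plan is to identify, for the given non-trivial $\phi \in K_R$, a Johnson-type invariant that grows linearly with $k$ and can be controlled by the word length in $\widetilde Out^0(\AG)$ with respect to $\tilde S^0$. By Theorem~\ref{kernel}, $K_R$ is free abelian with basis the non-trivial $\hat v$-component conjugations, so write $\phi = \prod_i \phi_i^{e_i}$, fix an index $i_0$ with $e = e_{i_0} \neq 0$, set $v = v_{i_0}$ and $C = C_{i_0}$, and pick a vertex $u \in C \setminus \mathrm{st}(v)$ (which exists because $C$ is a non-trivial $\hat v$-component).

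Work in the second nilpotent quotient $N = \AG/\gamma_3$, where $\gamma_3 = [[\AG,\AG],\AG]$ is the third term of the lower central series.  Its commutator subgroup $[\AG,\AG]/\gamma_3$ is free abelian with basis the commutators $[x,y]$ for non-commuting ordered vertex pairs.  Since the $\phi_i$ commute in $Out(\AG)$ and $\phi_i(u) \equiv u\cdot[v_i,u] \pmod{\gamma_3}$ whenever $u \in C_i$, a direct computation yields
\[
\phi^k(u)\cdot u^{-1} \;\equiv\; \prod_{i \,:\, u \in C_i} [v_i,u]^{k e_i} \pmod{\gamma_3}.
\]
The $\hat v$-components for a fixed vertex $v$ are pairwise disjoint, so among the factors with $v_i = v$ only $\phi_{i_0}$ moves $u$; factors with $v_i \neq v$ contribute to different basis elements $[v_i,u]$ of $[\AG,\AG]/\gamma_3$.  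Hence the coefficient of $[v,u]$ in $\phi^k(u)u^{-1}$ is precisely $ke$, which is non-zero and grows linearly in $k$.

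To deduce $\tau(\phi) > 0$, I would extend the $[v,u]$-coefficient of this Johnson-type calculation to a quasi-morphism $q\colon \widetilde Out^0(\AG)\to\Z$.  Tracking the evolution of $\sigma_N\sigma_{N-1}\cdots\sigma_j(u)u^{-1}$ modulo $\gamma_3$ as we prepend generators $\sigma_j \in \tilde S^0$ one at a time, each generator modifies the $[v,u]$-coefficient by a bounded amount: partial conjugations contribute additively via their Johnson image, inversions contribute bounded amounts, and the boundedness of transvection contributions relies on the fact that the transvections in $\tilde S^0$ are non-adjacent.  This gives $|ke| = |q(\phi^k)| \leq MN + D$ for constants $M, D$ independent of $k$, whence $\|\phi^k\|_{\tilde S^0} \geq ck$ for some $c > 0$ and $\tau(\phi) \geq c$.

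The hard part will be verifying bounded defect of $q$ on the non-adjacent transvection generators, since the Johnson homomorphism is only a genuine homomorphism on the IA subgroup, which excludes transvections; the exclusion of adjacent transvections is essential, as adjacent transvections would contribute unbounded amounts to bracket coefficients through their iterated interaction with the nilpotent class-$2$ structure.  An alternative route that avoids quasi-morphism technology is induction on $|V(\G)|$ via the exclusion homomorphism $E_w$: since $E_w$ sends $\tilde S^0$ into $\tilde S^0 \cup \{1\}$, it is $1$-Lipschitz, and by choosing a maximal class $[w]$ adjacent to $v$ (all vertices of $[w]$ lying in $\mathrm{lk}(v)$) as in the proof of Theorem~\ref{RF}, $E_w(\phi)$ remains a non-trivial element of $K_R(A_{\G-[w]})$, so $\tau(\phi) \geq \tau(E_w(\phi)) > 0$ by inductive hypothesis, with base case graphs small enough that $K_R$ is trivial.
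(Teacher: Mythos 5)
Your proposal is a plan rather than a proof, and in both of its branches the missing piece is exactly the hard content of the paper's argument. Your Route A (track the $[v,u]$-coefficient in $\AG/\gamma_3$) is the same basic strategy the paper uses -- exhibit a quantity that grows linearly under powers of $\phi$ but only linearly in word length over $\tilde S^0$ -- but you defer the two essential verifications. First, well-definedness on \emph{outer} classes: for a single chosen vertex $u$, composing with the inner automorphism by $v^{-ke}$ kills the $[v,u]$-coefficient of $\phi^k(u)u^{-1}$ entirely, so $|q(\phi^k)|=|ke|$ is simply false as a statement about the outer class unless you maximize over all vertices $u\neq v$ and minimize over representatives; the paper does exactly this and shows an inner automorphism can reduce the maximum by at most half (this is where the hypothesis that $\phi_{v}$ has at least two nontrivial $\hat v$-components enters). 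Second, the bounded defect under generators of $\tilde S^0$ is the bulk of the paper's proof: it requires the preliminary observation that \emph{every} transvection onto $v=w_1$ is adjacent (because a nontrivial $\hat v$-component conjugation forces at least two nontrivial $\hat v$-components, which rules out $v\leq u$ for non-adjacent $u$), so that every element of $\widetilde Out^0(\AG)$ fixes $v$ up to conjugacy and inversion; and it then requires a normal-form argument (Hermiller--Meier left-greedy forms) to show each generator increases the relevant count by at most $2$. You name this as ``the hard part'' and do not supply it, so Route A is not a proof.

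Route B is genuinely different from the paper and would be slicker if it worked, but it has a gap the paper itself flags in its Questions section: the graph $\G-[w]$ obtained by excluding a maximal class need not be connected (e.g.\ a path $a$--$w$--$b$), and the proposition you are inducting on is only stated, and only makes sense via the restriction machinery, for connected graphs. Unless you can always choose $[w]$ adjacent to $v_1$ with $\G-[w]$ connected and $E_w(\phi)\neq 1$ -- which is not true in general -- the induction does not close. (A smaller inaccuracy: $E_w$ is not $1$-Lipschitz on $\tilde S^0$, since a single partial conjugation can map to a product of several partial conjugations when its component splits in $\G-[w]$; this only costs a multiplicative constant and is harmless for positivity, but the connectivity issue is fatal as stated.)
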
  

\begin{proof} Fix $\phi\in K_R$.  We will find a function $\lambda=\lambda_\phi\colon \widetilde Out^0(A_\G)\to \mathbb{R}_{\geq 0}$  satisfying
 \begin{enumerate}
 \item $\lambda(\phi^k)\geq \frac{k}{2} $ and 
 \item $\lambda(\gamma_1\ldots\gamma_k)\leq 2k$  for $\gamma_i\in \tilde S^0$
 \end{enumerate}  
The proof is then finished by the following argument.  Let  $m_k=\|\phi^k\|$, and write $\phi^k=\gamma_1\ldots \gamma_{m_k}$ with $\gamma_i \in  \tilde S^0$.   Then   
$$\frac{k}{2}\leq \lambda(\phi^k)= \lambda(\gamma_1\ldots \gamma_{m_k})\leq 2m_k$$ so
$$\frac{m_k}{k}\geq \frac{1}{4}>0$$ for all $k$, so $$\tau(\phi)=\lim_{k\to\infty} \frac{m_k}{k}\geq \frac{1}{4}>0.$$

To define $\lambda$ recall that by Theorem~\ref{kernel},  the kernel $K_R$ is free abelian,  generated by  $\hat w$-component conjugations.   We write $$\phi=\phi_{w_1}\phi_{w_2}\ldots \phi_{w_k}$$  where $\phi_{w_i}$ is a nontrivial product of conjugations by $w_i$ and the  $w_i$ are distinct. 

First observe that the only transvections onto $w_i$ are adjacent transvections.  For if $u$ is not adjacent to $w_i$ and $w_i \leq u$, then there is only one non-trivial $\hat w_i$-component (the component of $u$), hence the unique $\hat w_i$-component conjugation is an inner automorphism.  It follows that every element of $\widetilde Out^0(\AG)$ fixes $w_i$ up to conjugacy and inversion.

Set $w=w_1$.  For an arbitrary element $x\in\AG$, define $p(x)=p_w(x)$ to be the absolute value of the largest power of $w$ which can occur in a minimal-length word representing $x$.  For example, if $u$ or $v$ does not commute with $w$, then $p(wuvw^{-2})=2$.   If a minimal word representing $x$ does not contain any powers of $w$, then $p(x)=0$.

In \cite{HM95}, Hermiller and Meier describe a "left greedy" normal form for words in $\AG$, obtained by shuffling letters as far left as possible using the commuting relations and canceling inverse pairs whenever they occur.  In particular, any reduced word can be put in normal form just by shuffling.     It follows that the highest power of $w$ that can occur in a minimal word for $x$ is equal to the highest power of $w$ appearing in the normal form for $x$. 

For any automorphism $f\in Aut(\AG)$, define $p(f)$ to be the maximum over all vertices $v \neq w$ of  $p(f(v))$. For an  outer  automorphism $\phi\in \widetilde Out^0(\AG)$, define $\lambda(\phi)$ to be the minimum value of $p(f)$ as $f$ ranges over automorphisms $f$ representing $\phi$.  We must show that $\lambda$ satisfies properties (1) and (2) above.  

(1)  Let $f_w$ be a $\hat w$-component conjugation  on the $\hat w$ component $C$, and let $v\neq w$ be a vertex of $\G$.  If $v\in C-st(w)$, then $p(f^k_w(v))=k$, and   $p(f^k_w(v))=0$ otherwise.  An inner automorphism can reduce the power of $w$ by shifting it to vertices in the complement of $C$,  but cannot reduce the maximum power of $w$ over all vertices by more than $[k/2]$.  Since $\phi_w=\phi_{w_1}$ is non-trivial on at least one $\hat w$-component, this implies $\lambda(\phi_w^k)\geq k/2$. 
Since the partial conjugations $\phi_i$ for $i>1$ do not change the power of $w$ occuring at any vertex,  we conclude that $\lambda(\phi^k)\geq k/2$.

(2) To prove property (2) we need to first establish some properties of the power function $p$.
By abuse of notation, we will view $\tilde S^0$ as a subset of $Aut(\AG)$ in the obvious way.
\begin{claim}  Let $x\in \AG$.  If  $p(x)=0$ and $f\in \tilde S^0$
then $p(f(x))\leq 1$.
\end{claim} 

\begin{proof}
If $f$ is a transvection or partial conjugation by some $u \neq w$, then $p(f(x))=0$. Likewise for inversions.  So the only cases we have to consider are when $f$  is either  a non-adjacent transvection of $w$ onto $v$ or partial conjugation of $C\subset \G$ by $w$.  
 
Suppose $f$ is a (non-adjacent) transvection $f\colon v\mapsto vw$ or $f\colon v\mapsto wv$.  Then $f(x)$ has the property that any two copies of $w$ are separated by $v$ and any two copies of $w^{-1}$ are separated by $v^{-1}$.  ``Shuffling left" can never switch the order of $v$ and $w$, so this must also be true in the normal form for $f(x)$.   

 If $f$ is a partial conjugation by $w$, then the $w$'s in $f(x)$ alternate, i.e.  $$f(x)=a_1wa_2w^{-1}a_3 w \dots $$ where the $a_i$ are words which do not use $w$ or $w^{-1}$, so shuffling left can only cancel $w$-pairs, never increase the power to more than 1.
\end{proof}

A minimal word representing $x\in\AG$ can be put in the form $a_0w^{k_1}a_1w^{k_2}\ldots w^{k_n}a_n$ where
\begin{itemize}
\item
$a_i$ contains no $w$ and 
\item $w$ does not commute with $a_i$ for  $1 \leq i \leq n-1$.
\end{itemize}
so that $p(x)=max \{k_i\}$. 

\begin{claim}\label{px} For any $f\in \tilde S^0$ and $x\in \AG$, $p(f(x))\leq p(x)+2.$.
\end{claim}

\begin{proof}  First assume that $f(w)=w$.  This holds for all generators in $\tilde S^0$ with the exception of a partial conjugation by $u$ of a component $C$ containing $w$.  Write $x=a_0w^{k_1}a_1w^{k_2}\ldots,w^{k_n}a_n$ as above.  Let $b_i$ be the normal form for $f(a_i)$.  Then 
$f(x)=b_0w^{k_1}b_1w^{k_2}\ldots w^{k_n}b_n$, where $b_i$ does not commute with $w$.

Case 1:  $f$ is a partial conjugation or transvection by $w$.  Then no $w$ can shuffle across an entire $b_i$, so we need only consider the highest power appearing in $b_{i-1}w^{k_i}b_i$.  Now by the previous claim, $b_i$ is of the form 
$$b_i=c_1w^{\pm1}c_2w^{\pm 1}\ldots w^{\pm 1}c_k$$
where $w$ does not commute with $c_2,\ldots,c_{k-1}$ and similarly for $b_{i-1}$.  It follows that left shuffling of $b_{i-1}w^{k_i}b_i$ can at worst combine $w^{k_i}$ with the last $w$ in $b_{i-1} $ and the first $w$ in $b_i$, producing a power of at most $|k_i|+2$.

Case 2:  $f$ is a partial conjugation (on a component not containing $w$) or transvection by some $u\neq w$.  Then no new $w$'s appear and no $w$ can shuffle across an entire $b_i$,  so the maximum power of $w$ does not change, i.e., $p(f(x))=p(x)$.

It remains to consider the case where $f$ is a partial conjugation by $u$ with  $f(w)=uwu^{-1}$.  Then $f$ can be written as the composite of an inner automorphism by $u$ followed by a product of partial conjugations fixing $w$.  By case 2 above, we have $p(f(x))=p(uxu^{-1})$.  Since $u$ does not commute with $w$, conjugating by $u$ changes only the factors $a_0$ and $a_n$ in the normal form for $x$. Thus  $p(uxu^{-1})=p(x)$. 
\end{proof}

If $f\in Aut(\AG)$ can be written as a product of $m$ elements of $\tilde S^0$, then  the above claim shows that $p(f(v))\leq 2m$, for any vertex $v\neq w$.  If $\phi\in \widetilde Out^0(\AG)$ can be written as a product $\phi=\phi_1\ldots \phi_m$, with $\phi_i$ represented by $f_i \in \tilde S^0$, then $\lambda(\phi)\leq p(f)\leq 2m$.  This completes the proof of the proposition.
 \end{proof}

\begin{corollary}\label{positive} If  $\G$ is homogeneous of dimension $n$, then  the translation length of every infinite-order element of $\widetilde Out^0(\AG)$ is positive.
\end{corollary}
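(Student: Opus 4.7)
I would argue by induction on the number of vertices of $\G$. The cases $|\G|\le 1$ are vacuous. If $\G$ is discrete, then every transvection is non-adjacent and $\widetilde Out^0(\AG)=Out^0(F_k)$, so positive translation length of infinite-order elements reduces to Alibegovi\'c's theorem for $Out(F_k)$. If $\G$ is a complete graph, every transvection is adjacent and there are no partial conjugations, so $\widetilde Out^0(\AG)$ is a finite $2$-group of inversions and the claim is vacuous. If $\G=st[v]$ for some maximal $[v]$ but $\G$ is not complete, an inspection of the generators shows $\widetilde Out^0(\AG)\cong H\times\widetilde Out^0(A_{lk[v]})$ with $H=\Z/2\Z$ when $[v]$ is a singleton abelian vertex and $H=Out^0(F_{|[v]|})$ when $[v]$ is non-abelian; infinite-order elements of each factor have positive translation length by Alibegovi\'c and by the inductive hypothesis applied to $lk[v]$ (which has fewer vertices), and this transfers to the product.

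In the remaining main case, $\G\neq st[v]$ for any maximal $[v]$, so each $st[v]$ has strictly fewer vertices than $\G$. Moreover each $st[v]=[v]*lk[v]$ is again homogeneous of dimension $n$: by Lemma~\ref{nostar} the link $lk[v]$ is homogeneous of dimension $n-1$, and homogeneity of the join at every vertex can then be checked directly. I would carry out the induction via the amalgamated restriction
\[
R=\prod R_v\colon \widetilde Out^0(\AG)\longrightarrow \prod_{[v]\text{ maximal}}\widetilde Out^0(A_{st[v]}).
\]
Given an infinite-order $\phi\in\widetilde Out^0(\AG)$, if $R(\phi)$ has finite order then some power $\phi^m$ lies in $K_R\subseteq\widetilde Out^0(\AG)$ and is non-trivial, so Proposition~\ref{translation} yields $\tau(\phi^m)>0$, whence $\tau(\phi)=\tau(\phi^m)/m>0$. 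Otherwise some component $R_v(\phi)$ has infinite order in $\widetilde Out^0(A_{st[v]})$, has positive translation length there by induction, and this lifts to $\phi$ since $R_v$ sends each generator of $\widetilde Out^0(\AG)$ to a product of bounded length in the generators of $\widetilde Out^0(A_{st[v]})$.

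The main technical obstacle is verifying that $R_v$ does land in $\widetilde Out^0(A_{st[v]})$. Inversions are immediate. For a non-adjacent transvection $u\mapsto uw$ with $u\in st[v]$, maximality of $[v]$ forces $w\in st[v]$: if $u\in[v]$ then $w\in[v]$, and if $u\in lk[v]$ then $[v]\subseteq lk(u)\subseteq st(w)$ forces $w\in st[v]$. The restriction is therefore again a non-adjacent transvection (or trivial). For a partial conjugation by $u$ on a component $C$: when $u\in st[v]$ the restriction is a bounded product of partial conjugations in $A_{st[v]}$, while when $u\notin st[v]$ a representative-adjustment argument in the spirit of Lemma~\ref{star} should show that a preserving representative acts on $A_{st[v]}$ by an inner automorphism, so the restriction is trivial in $Out^0(A_{st[v]})$. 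This case analysis is where I expect most of the work to be.
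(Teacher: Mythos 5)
Your argument is correct in outline, but it takes a genuinely different route from the paper's. The paper inducts on the homogeneous dimension $n$ using the amalgamated \emph{projection} $P=\prod P_v$ into $\prod \widetilde Out^0(A_{lk[v]})$: each $P_v$ visibly sends $\tilde S^0$ to $\tilde S^0\cup\{1\}$, the kernel of $P$ restricted to $\widetilde Out^0(\AG)$ is exactly $K_R$ (because $K_P$ is generated by $K_R$ and leaf transvections, and leaf transvections are adjacent), and one passes to the preimage of a torsion-free finite-index subgroup of the target so that ``finite-order image'' becomes ``trivial image''; the induction then bottoms out at Alibegovi\'c's theorem in dimension $1$, with $K_R$ handled by Proposition~\ref{translation}. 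You instead induct on the number of vertices using the amalgamated \emph{restriction} $R=\prod R_v$ into $\prod\widetilde Out^0(A_{st[v]})$, with the star and complete-graph cases split off. This works, and your power-raising step ($R(\phi)$ of finite order $\Rightarrow \phi^m\in K_R$ for some $m$) neatly avoids any appeal to virtual torsion-freeness. What it costs you is precisely what you flag: you must check that $st[v]$ is again homogeneous (of the \emph{same} dimension, which is why you are forced to induct on vertices rather than dimension), that $R_v$ carries $\tilde S^0$ into $\tilde S^0\cup\{1\}$ of $A_{st[v]}$, and the structure of $\widetilde Out^0$ when $\G=st[v]$ --- none of which is needed on the projection route.

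Two small points on the details you leave open. First, your key unproved claim --- that a partial conjugation by $u\notin st[v]$ restricts to an inner automorphism of $A_{st[v]}$ --- is true: for $[v]$ maximal and $u\notin st[v]$, the set $st[v]-st(u)$ lies in a \emph{single} component of $\G-st(u)$ (note $[v]\cap st(u)=\emptyset$, every vertex of $st[v]-st(u)$ is adjacent to a vertex of $[v]$, and in the non-abelian case two vertices of $[v]$ can be joined through a vertex of $lk(v)-st(u)$, which exists since $lk(v)\subseteq st(u)$ would give $v\leq u$, contradicting maximality). Hence the partial conjugation either fixes $st[v]$ pointwise or conjugates all of it by $u$. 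Second, your dichotomy for $H$ in the case $\G=st[v]$ omits abelian classes $[v]$ with more than one vertex; Lemma~\ref{nostar}(1) does not rule these out when $\G$ \emph{is} a star. This does no harm --- all transvections inside a clique are adjacent, so the corresponding factor is still finite --- but the case should be acknowledged.
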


\begin{proof} We proceed by induction on $n$.  For $n=1$, $\AG$ is a free group $F_k$ and $\widetilde Out^0(\AG)=Out(F_k)$.  Alibegovic proved that infinite order elements of $Out(F_k)$ have positive translation length \cite{Al}.

For $n>1$, we will make use of the projection homomorphism  
$$P=\prod P_v \colon Out^0(\AG)\to \prod_{v \,\,maximal} Out(A_{lk[v]}).$$
Note that each projection $P_v$ maps generators in $\tilde S^0$ to either the trivial map or to a generator of the same form in $Out(A_{lk[v]})$. Thus the image of $\widetilde Out^0(\AG)$
lies in the product of the subgroups $\widetilde Out^0(A_{lk[v]})$.  Moreover, the kernel of $P$ restricted to  $\widetilde Out^0(\AG)$ is just $K_R$.  This follows from the fact that  $K_P$ is generated by $K_R$ and leaf-transvections, which by definition, are adjacent transvections.

By \cite{ChVo}, all of the groups we are considering are virtually torsion-free.
Let $G\leq \widetilde Out^0(\AG)$ be the inverse image of a torsion-free finite-index subgroup of $\prod \widetilde Out^0(A_{lk[v]})$.  If $\phi\in \widetilde Out^0(\AG)$ has infinite order, then some power of $\phi$ is a non-trivial (infinite-order) element in G,  so we need only prove that elements of  $G$ have positive translation length in $\widetilde Out^0(\AG)$. 

If the image of $\phi \in G$ is non-trivial in some $\widetilde Out^0(A_{lk[v]})$, then it has positive translation length by induction. If the image is trivial, then $\phi$ lies in $K_R$ so we are done by  Proposition~\ref{translation}.
\end{proof}

\begin{corollary}\label{strong Tits} If  $\G$ is homogeneous of dimension $n$, then $\widetilde Out(\AG)$  satisfies the strong Tits alternative, that is, every subgroup of $\widetilde Out(\AG)$ is either virtually abelian or contains a non-abelan free group.
\end{corollary}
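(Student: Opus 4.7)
The strategy is to upgrade Theorem~\ref{TA} by using the translation-length positivity of Corollary~\ref{positive} to eliminate solvable subgroups that are not virtually abelian. First I would apply Theorem~\ref{TA} to an arbitrary subgroup $H \le \widetilde Out(\AG) \le Out(\AG)$: either $H$ contains a non-abelian free group, in which case we are done, or $H$ is virtually solvable. So it suffices to prove that every virtually solvable subgroup of $\widetilde Out(\AG)$ is virtually abelian. Since $\widetilde Out^0(\AG)$ has finite index in $\widetilde Out(\AG)$, after replacing $H$ by a finite-index subgroup we may assume $H$ is a solvable subgroup of $\widetilde Out^0(\AG)$, and the goal is to show $H$ is virtually abelian.

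Assume for contradiction that $H$ is solvable but not virtually abelian. The plan is to produce an infinite-order element $\phi \in H$ whose translation length in $\widetilde Out^0(\AG)$, measured with respect to the generating set $\tilde S^0$, is zero; this will contradict Corollary~\ref{positive}. I would pass to a finitely generated subgroup of $H$ that still has derived length at least $2$ (and is therefore not virtually abelian) and then exploit the non-trivial conjugation action of $H$ on an abelian term of its derived series. Concretely, choose $a \in [H,H] \setminus \{1\}$ and $h \in H$ which does not commute with $a$; then the conjugates $h^k a h^{-k}$ can be written as words of length $O(k)$ in the ambient generating set, while their structure in the abelian subgroup they generate is controlled by the eigenvalues of the conjugation action of $h$. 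In the non-virtually-abelian case these eigenvalues cannot all be roots of unity, and a Baumslag--Solitar-type (or Perron--Frobenius) argument produces an infinite-order element $\phi$ with $\|\phi^n\|/n \to 0$, i.e.\ $\tau(\phi) = 0$.

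The main obstacle is giving the distortion step in a form that covers general (rather than just polycyclic) solvable subgroups, since a non-(virtually abelian) solvable group need not visibly contain a $BS(1,n)$ subgroup. The cleanest route is to combine Theorem~\ref{mu} (which bounds the derived length of any solvable subgroup by $n$) with the virtual torsion-freeness of $Out(\AG)$ established in \cite{ChVo} to reduce to the finitely generated, torsion-free, solvable -- hence polycyclic -- case, where the structure theory is strong enough that failure of virtual abelianness forces an eigenvalue off the unit circle in some conjugation action, yielding the required distorted element. Once this step is in place, Corollary~\ref{positive} immediately gives the contradiction and hence the strong Tits alternative for $\widetilde Out(\AG)$.
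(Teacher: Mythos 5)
Your overall skeleton matches the paper's: reduce via Theorem~\ref{TA} to showing that solvable subgroups of $\widetilde Out^0(\AG)$ are virtually abelian, then use positivity of translation lengths (Corollary~\ref{positive}) to rule out non-abelian ones. But the way you execute the second step has a genuine gap. The paper does not argue directly; it invokes the criterion of Bestvina \cite{Be} (drawing on Conner \cite{Co} and Gersten--Short \cite{GS}): a finitely generated group that is positive, virtually torsion-free, \emph{and whose abelian subgroups are all finitely generated} has only virtually abelian solvable subgroups. That third hypothesis is essential and must be verified separately; the paper does so by an induction using the projection homomorphisms, reducing to the corresponding facts for $Out(F_n)$ and $GL(n,\Z)$. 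Your proposal never addresses it.

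The gap is not cosmetic. First, your reduction ``finitely generated, torsion-free, solvable --- hence polycyclic'' is false: $BS(1,2)\cong\Z[1/2]\rtimes\Z$ and the wreath product $\Z\wr\Z$ are finitely generated, torsion-free and metabelian, but not polycyclic. Second, and more seriously, positivity of translation lengths alone cannot produce the contradiction you are after: in $\Z\wr\Z$ every nontrivial element already has positive translation length (an element of the base group translates at least by the $\ell^1$-norm of its support, and an element with nonzero $\Z$-coordinate translates at least linearly in that coordinate), yet the group is solvable and not virtually abelian. Since $\tau_H(h)\geq\tau_G(h)$ for a subgroup $H\leq G$, such a subgroup of $\widetilde Out^0(\AG)$ would be perfectly consistent with Corollary~\ref{positive}; what actually excludes it is that it contains the infinitely generated abelian subgroup $\bigoplus_{\Z}\Z$. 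So you must either verify finite generation of abelian subgroups (as the paper does) and quote the Bestvina--Conner--Gersten--Short criterion, or reprove that criterion in full; the eigenvalue/distortion argument you sketch does handle the polycyclic case (and, via distortion of central elements, the nilpotent case), but you have no legitimate reduction to that case without the missing hypothesis.
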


\begin{proof}  By Theorem \ref{TA},  every subgroup not containing a free group is virtually solvable.  So it remains to show that every solvable subgroup is virtually abelian. 
Since $\widetilde Out^0(\AG)$ has finite index in $\tilde Out(\AG)$, it suffices to prove the same statement for $\widetilde Out^0(\AG)$.  

Bestvina \cite{Be}, citing arguments from Conner \cite{Co} and Gersten and Short \cite{GS},  shows that if a finitely-generated group is positive, virtually torsion-free and its abelian subgroups are finitely generated, then solvable subgroups must be virtually abelian.   $\widetilde Out^0(\AG)$ is positive by Corollary~\ref{positive} and virtually torsion-free by \cite{ChVo}.    Since $\G$ is homogeneous, the fact that abelian subgroups of $Out^0(\AG)$ are finitely generated follows by a simple induction from the same fact for $Out(F_n)$  and $GL(n,\Z)$ using the projection homomorphisms. 
 \end{proof}

In dimension 2, the only adjacent transvections are leaf transvections, so if $\G$ has no leaves, then 
$\widetilde Out(\AG)=Out(\AG)$.  Thus the following is a special case of Corallary~\ref{strong Tits}.

\begin{corollary}\label{2Dim} If  $\G$ is connected with no triangles and no leaves, then $Out(\AG)$ satisfies the strong Tits alternative.
\end{corollary}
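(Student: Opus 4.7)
The plan is to deduce this corollary directly from Corollary~\ref{strong Tits}. Under the stated hypotheses I will verify two things: that $\G$ is homogeneous of dimension $2$, and that the standard generating set $S$ of $Out(\AG)$ contains no adjacent transvections. Together these force $\widetilde Out(\AG) = Out(\AG)$, and then the strong Tits alternative for the modified group transfers verbatim to $Out(\AG)$.

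First I would observe that ``connected and triangle-free'' is exactly the definition of homogeneous of dimension $2$ (this was recorded as Example~(1) following Lemma~\ref{galleries}), so Corollary~\ref{strong Tits} is available. The only real step is to check that, when $\G$ is triangle-free, every adjacent transvection is a leaf transvection. Suppose $v \mapsto vw$ is an adjacent transvection, so $v$ and $w$ are adjacent vertices and $lk(v) \subseteq st(w)$. If $u \in lk(v)$ with $u \neq w$, then $u \in st(w) = \{w\} \cup lk(w)$ forces $u \in lk(w)$. But then $\{u,v,w\}$ would span a triangle in $\G$, contradicting the hypothesis. Hence $lk(v) = \{w\}$, which means $v$ is a leaf of $\G$.

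Since $\G$ is assumed leafless, no such transvection exists. Therefore $\tilde S = S$ and so $\widetilde Out(\AG) = Out(\AG)$. Applying Corollary~\ref{strong Tits} to this equality yields the strong Tits alternative for $Out(\AG)$. There is no substantive obstacle here: the deep content has been packaged into Corollary~\ref{strong Tits}, and this statement simply identifies a clean combinatorial hypothesis on $\G$ that makes the ``adjacent transvection'' restriction vacuous.
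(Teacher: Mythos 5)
Your proposal is correct and follows exactly the paper's route: identify triangle-free connected graphs as the homogeneous dimension-$2$ case, observe that any adjacent transvection $v\mapsto vw$ in a triangle-free graph forces $lk(v)=\{w\}$ so that $v$ is a leaf, conclude $\widetilde Out(\AG)=Out(\AG)$ when there are no leaves, and invoke Corollary~\ref{strong Tits}. Nothing is missing.
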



\section{Questions}

Since the projection homomorphism $P\colon Out^0(\AG)\to \prod Out^0(A_{lk[v]})$ is defined only for connected graphs $\G$, inductive arguments using $P$   break down if the links of maximal vertices are not connected, unless the desired result is known by some other argument for outer automorphism groups of free products.  For homogeneous graphs, the links are always connected so this is not an issue, but  several of the questions answered in this paper remain open for non-homogeneous graphs.   Specifically, we can ask 

\begin{enumerate}
\item Is the maximal virtual derived length of a solvable subgroup of $Out(\AG)$ bounded by the dimension of $\AG$?
\item Does $Out(\AG)$ satisfy the Tits alternative?
\end{enumerate}


\def\cprime{$\prime$}

\end{document}